\def\co{\colon\thinspace}
\DeclareMathAlphabet{\mathsfsl}{OT1}{cmss}{m}{sl}
\newcommand{\spinc}{{\mathrm{Spin}^c}}
\newtheorem{thm}{Theorem}[section]
\newtheorem{lem}[thm]{Lemma}
\newtheorem{cor}[thm]{Corollary}
\newtheorem{prop}[thm]{Proposition}
\theoremstyle{definition}
\newtheorem{defn}[thm]{Definition}
\newtheorem{rem}[thm]{Remark}
\begin{document}

\title{Exceptional surgeries on hyperbolic fibered knots}

\author{{\Large Yi NI}\\{\normalsize Department of Mathematics, Caltech, MC 253-37}\\
{\normalsize 1200 E California Blvd, Pasadena, CA
91125}\\{\small\it Emai\/l\/:\quad\rm yini@caltech.edu}}

\date{}
\maketitle

\begin{abstract}
Let $K\subset S^3$ be a hyperbolic fibered knot such that $S^3_{p/q}(K)$, the $\frac pq$--surgery on $K$, is non-hyperbolic. We prove that if the monodromy of $K$ is right-veering, then
$0\le\frac pq\le 4g(K)$. The upper bound $4g(K)$ cannot be attained if $S^3_{p/q}(K)$ is a small Seifert fibered L-space. If the monodromy of $K$ is neither right-veering nor left-veering, then $|q|\le3$. 

As a corollary,
for any given positive torus knot $T$, if $p/q\ge4g(T)+4$, then $p/q$ is a characterizing slope. This improves earlier bounds of Ni--Zhang and McCoy. We also prove that some finite/cyclic slopes are characterizing. More precisely,
$14$ is characterizing for $T_{4,3}$, $17$ is characterizing for $T_{5,3}$, and $4n+1$ is characterizing for $T_{2n+1,2}$ except when $n=5$. By a recent theorem of Tange, this shows that $T_{2n+1,2}$ is the only knot in $S^3$ admitting a lens space surgery while the Alexander polynomial has the form $t^n-t^{n-1}+t^{n-2}+\text{lower order terms}$.

In the appendix, we prove that if the rank of the second term of the knot Floer homology of a fibered knot is $1$, then the monodromy is either right-veering or left-veering.
\end{abstract}

\section{Introduction}

Given a knot $K\subset S^3$, let $S^3_{p/q}(K)$ be the manifold obtained by $\frac pq$--surgery on $K$. The main theorem in this paper gives a constraint on exceptional surgeries on hyperbolic fibered knots.

\begin{thm}\label{thm:NotSFS}
Let $K\subset S^3$ be a hyperbolic fibered knot such that $S^3_{p/q}(K)$ is non-hyperbolic. 
\newline(1) If the monodromy of $K$ is right-veering, then $0\le\frac pq\le 4g(K)$. Moreover, if $S^3_{p/q}(K)$ is a small Seifert fibered L-space, then $\frac pq\ne4g(K)$. 
\newline(2) If the monodromy of $K$ is neither right-veering nor left-veering, then $|q|\le2$.
\end{thm}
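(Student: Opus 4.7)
The plan is to use the dictionary between monodromy properties of fibered knots and Heegaard Floer invariants, combined with classical results on exceptional Dehn fillings. The two main tools I would start from are Hedden's characterization that a fibered knot $K$ has right-veering monodromy if and only if $\tau(K)=g(K)$, and the Ozsv\'ath--Szab\'o L-space surgery inequality $p/q \ge 2\tau(K)-1$, together with Honda--Kazez--Mati\'c's result linking right-veering to tightness of the supported contact structure.

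For the lower bound in (1), I would argue by contradiction: suppose $p/q<0$ and $S^3_{p/q}(K)$ is non-hyperbolic. By geometrization the outcome is reducible, toroidal, or Seifert fibered. The Seifert fibered L-space case is immediately excluded by the Ozsv\'ath--Szab\'o inequality, since $2g(K)-1 \ge 1>0$ forces all L-space slopes to be positive. Reducibility and the remaining Seifert fibered or toroidal outcomes should be excluded using the structure of $\widehat{HFK}(K, g(K))$ forced by right-veering monodromy, combined with known constraints on $HF^+$ of Seifert fibered and toroidal 3-manifolds. For the upper bound $p/q \le 4g(K)$, the idea is to show that slopes exceeding $4g(K)$ force $HF^+(S^3_{p/q}(K))$ into a shape incompatible with any non-hyperbolic outcome: the surgery formula expresses $HF^+$ in terms of $CFK^\infty(K)$, the right-veering hypothesis controls the top filtration level, and a rank/$d$-invariant comparison rules out the remaining geometric possibilities. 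The sharper exclusion $p/q \ne 4g(K)$ for small Seifert fibered L-space outcomes is a boundary case needing a finer correction term computation and an appeal to the classification of Alexander polynomials of L-space knots.

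For (2), when the monodromy is neither right-veering nor left-veering there exist arcs in the fiber moved in both directions, and this mixed behavior produces sutured decompositions of the surgery complement which, combined with Culler--Gordon--Luecke--Shalen-type distance bounds between exceptional slopes and their subsequent refinements, should yield the denominator bound $|q| \le 2$. I expect the main obstacle to be the sharp upper bound in (1), including the non-attainment at $4g(K)$: this requires a delicate interplay between the Heegaard Floer surgery formula, the $CFK^\infty$ structure forced by right-veering monodromy, and the classification of small Seifert fibered L-spaces, and is the step where a genuinely new argument (as opposed to an assembly of existing results) is most likely needed.
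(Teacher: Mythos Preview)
Your approach has a foundational error and misses the actual mechanism behind the bounds. The claim that a fibered knot has right-veering monodromy \emph{if and only if} $\tau(K)=g(K)$ is false: Hedden's result (combined with Honda--Kazez--Mati\'c) gives only one direction, since tightness of the supported contact structure implies right-veering but not conversely. There are right-veering open books supporting overtwisted contact structures, so from the right-veering hypothesis alone you cannot conclude $\tau(K)=g(K)$, and the Ozsv\'ath--Szab\'o L-space inequality is not available to you. More seriously, even granting $\tau(K)=g(K)$, nothing in the Heegaard Floer surgery formula singles out the threshold $4g(K)$ for \emph{all} non-hyperbolic outcomes (reducible, toroidal, non-L-space Seifert fibered); your plan to rule these out by ``rank/$d$-invariant comparison'' is not a real argument.

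The paper's proof goes through essential laminations and the fractional Dehn twist coefficient, not Heegaard Floer homology. Right-veering is equivalent (for hyperbolic fibered $K$) to the fractional Dehn twist coefficient being positive, i.e.\ the degeneracy locus of the stable lamination is $d(\lambda)=\tfrac{m}{1}$ with $m>0$. Gabai's bound gives $2\le m\le 4g(K)-2$, and then Wu's theorem says that if $\Delta(\tfrac{p}{q},\tfrac{m}{1})\ge 3$ the surgered manifold is hyperbolic; this forces $0\le\tfrac{p}{q}\le 4g(K)$. At the boundary slope $4g(K)$ one still has $\Delta\ge 2$, so the lamination remains essential, and Brittenham--Claus convert it to a taut foliation in the small Seifert fibered case, excluding L-spaces. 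Part~(2) is immediate in this framework: neither-veering means $d(\lambda)=\tfrac{m}{0}$, so $\Delta(\tfrac{p}{q},\tfrac{1}{0})=|q|$, and $|q|\ge 3$ again forces hyperbolicity by Wu. None of your proposed ingredients (CGLS distance bounds, sutured decompositions from mixed arcs, $CFK^\infty$ structure) enter.
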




Right-veering diffeomorphisms were defined in \cite{HKM}. (This concept was originally conceived by Gabai.)
Let $F$ be a compact oriented surface with boundary, $a,b\subset F$ be two properly embedded arcs with $a(0)=b(0)=x\in\partial F$. We isotope $a,b$ with endpoints fixed, so that $|a\cap b|$ is minimal.
We say $b$ is {\it to the right of $a$} at $x$, if either $b$ is isotopic to $a$ with endpoints fixed, or $(b\cap U)\setminus\{x\}$ lies in the ``right'' component of $U\setminus a$, where $U\subset F$ is a small neighborhood of $x$. See Figure~\ref{fig:ToRight}. A diffeomorphism $\phi\co F\to F$ with $\phi|_{\partial F}=\mathrm{id}$ is {\it right-veering} if for every $x\in\partial F$ and every properly embedded arc $a\subset F$ with $x\in a$, the image $\phi(a)$ is to the right of $a$ at $x$. Similarly, we can define {\it left-veering} diffeomorphisms.

\begin{figure}[ht]
\begin{picture}(340,70)
\put(100,6){\scalebox{0.5}{\includegraphics*
{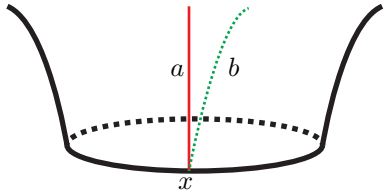}}}

\put(162,43){$a$}

\put(184,43){$b$}

\put(165,0){$x$}

\end{picture}
\caption{\label{fig:ToRight}The arc $b$ is to the right of $a$ at $x$.}
\end{figure}

A rational homology sphere $Y$ is an {\it L-space} if $\mathrm{rank}\widehat{HF}(Y)=|H_1(Y)|$. A knot $K\subset S^3$ is an {\it L-space knot}, if there exists a slope $\frac pq>0$ such that $S^3_{p/q}(K)$ is an L-space. 

\begin{rem}
Theorem~\ref{thm:NotSFS} was more or less known to experts. The point we want to make is that this theorem becomes very useful when combined with Heegaard Floer homology. First of all, knot Floer homology detects whether a knot is fibered \cite{Gh,NiFibred}. Secondly, the right-veering condition can often be checked using Heegaard Floer homology. Honda--Kazez--Mati\'c \cite{HKM} proved that if an open book supports a tight contact structure, then the corresponding monodromy is right-veering. It is often possible to get tightness from Heegaard Floer homology \cite{OSzContact}. For example, when $K$ is an L-space knot, then $K$ is fibered, and the open book with binding $K$ supports the unique tight contact structure on $S^3$, thus the monodromy of $K$ is right-veering. In the appendix, we will prove that if $\mathrm{rank}\widehat{HFK}(S^3,K,g(K)-1)=1$ for a fibered knot $K$, then its monodromy is either right-veering or left-veering.
\end{rem}

Theorem~\ref{thm:NotSFS} can be used to get bounds on characterizing slopes for torus knots.
A slope $\frac pq\in \mathbb Q$ is a {\it characterizing slope} for a knot $L\subset S^3$, if $S^3_{p/q}(K)\cong S^3_{p/q}(L)$ implies $K=L$, where ``$\cong$'' stands for orientation-preserving homeomorphism.
A famous theorem of Kronheimer--Mrowka--Ozsv\'ath--Szab\'o \cite{KMOSz} says that all rational numbers are characterizing slopes for the unknot, confirming a
conjecture of Gordon. Special cases of this theorem were proved in \cite{CGLS,GL,Gabai3}, and a Heegaard Floer proof of this theorem was given in \cite{OSzRatSurg}. Ozsv\'ath and Szab\'o \cite{OSz3141} further proved that all rational numbers are characterizing slopes for the trefoils and the figure-8 knot. These knots are the only knots for which the sets of characterizing slopes are known.

Dehn surgeries on torus knots have been classified in \cite{Moser}. For the torus knot $T_{r,s}$, $r,s>0$, the $rs$--surgery is a connected sum of two lens spaces. If $\frac pq=rs\pm\frac1q$, the $\frac pq$--surgery will be the lens space $L(p,qs^2)$, where the convention for $L(p,q)$ is that it is oriented as the $\frac pq$--surgery on the unknot. For all other slopes, the $\frac pq$--surgery is a Seifert fibered space over $S^2(r,s,|ps-qr|)$.

Ni and Zhang \cite{NiZhang} proved that $\frac pq$ is characterizing for the torus knot $T_{r,s}$ whenever $\frac pq>\frac{30(r^2-1)(s^2-1)}{67}$, where $r,s>0$. McCoy \cite{McCoySharp} improved this lower bound to $\frac{43}4(rs-r-s)$ which is linear in terms of the genus. As a corollary of Theorem~\ref{thm:NotSFS}, we improve the lower bound.

\begin{cor}\label{cor:CharSlope}
Let $r,s>0$ be a pair of relatively prime integers.
Then a slope $\frac pq$ is characterizing for the torus knot $T_{r,s}$ whenever \[\frac pq\ge4g(T_{r,s})+4=2(r-1)(s-1)+4.\]
\end{cor}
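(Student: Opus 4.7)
Suppose $S^3_{p/q}(K)\cong S^3_{p/q}(T_{r,s})$ with $p/q\ge 4g(T_{r,s})+4$; the goal is $K\cong T_{r,s}$. My first step extracts information about $K$ from Heegaard Floer theory. Since $T_{r,s}$ is an L-space knot and $p/q>0$, the target $S^3_{p/q}(T_{r,s})$ is an L-space, so $S^3_{p/q}(K)$ is too and $K$ is itself an L-space knot. By the work of Ozsv\'ath--Szab\'o, Ghiggini, and Ni, $K$ is then fibered with right-veering monodromy; since $p/q$ far exceeds $2g(T_{r,s})-1$, the mapping cone formula recovers $\widehat{HFK}(K)$ from the surgery, forcing $\Delta_K(t)=\Delta_{T_{r,s}}(t)$ and in particular $g(K)=g(T_{r,s})$.

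The second step applies Theorem~\ref{thm:NotSFS}. For $p/q\ge 4g(T_{r,s})+4>rs$, the target $S^3_{p/q}(T_{r,s})$ is a small Seifert fibered L-space over $S^2(r,s,|ps-qr|)$, hence non-hyperbolic, irreducible, and atoroidal. If $K$ were hyperbolic, Theorem~\ref{thm:NotSFS}(1) would give $p/q<4g(K)=4g(T_{r,s})$ (the endpoint $4g(K)$ being excluded because the surgery is a small SFS L-space), contradicting the hypothesis. So $K$ is non-hyperbolic, and (being nontrivial, as the target is not a lens space in this range) $K$ is a torus knot or a satellite.

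I then rule out the satellite case: if $S^3\setminus K$ contains an essential companion torus $T$, after $p/q$-filling $T$ either survives as an essential torus in $S^3_{p/q}(K)$---contradicting atoroidality of the target---or compresses, in which case standard arguments force $K$ to be a cable $C_{a,b}(J)$ surgered at its cabling slope $ab$, making $S^3_{p/q}(K)$ reducible and contradicting irreducibility of the target. Hence $K=T_{a,b}$ for coprime $a,b>0$. Finally, the identity $\Delta_{T_{a,b}}=\Delta_{T_{r,s}}$ forces $\{a,b\}=\{r,s\}$: the largest $d$ for which the cyclotomic polynomial $\Phi_d$ divides $\Delta_{T_{r,s}}$ is $rs$, so $ab=rs$, and combined with $(a-1)(b-1)=(r-1)(s-1)$ this gives $a+b=r+s$, whence $\{a,b\}=\{r,s\}$ by Vieta. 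Thus $K\cong T_{r,s}$. The main obstacle is the satellite case, where the cabling-slope versus non-cabling-slope dichotomy must be justified carefully via the JSJ decomposition of $S^3\setminus K$; the rest is a clean chain of Heegaard Floer and elementary consequences of Theorem~\ref{thm:NotSFS}.
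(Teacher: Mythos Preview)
Your hyperbolic and torus-knot cases are fine and match the paper, but the satellite case contains a genuine gap. Your dichotomy ``either $T$ survives as an essential torus, or $T$ compresses and then $K$ is a cable surgered at its cabling slope, giving a reducible manifold'' is false. When the companion torus $T=\partial V$ compresses after filling, the correct conclusion (via Gabai and Scharlemann) is that $V_{p/q}(K)$ either is a solid torus or is reducible (the latter only at the cabling slope of a cable). You have addressed only the reducible branch. In the solid-torus branch, $K$ is a $0$-- or $1$--bridge braid in $V$ with winding number $w\ge 2$, and $S^3_{p/q}(K)\cong S^3_{p/(qw^2)}(L)$ for the companion $L$; this can perfectly well be an irreducible, atoroidal small Seifert fibered space (e.g.\ cables of torus knots admit lens space surgeries away from the cabling slope). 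So nothing you wrote rules this out. The paper handles this branch by taking an innermost companion torus, invoking \cite{GabaiSolidTori} to get the solid-torus conclusion, then splitting on whether $L$ is hyperbolic (bounded via Lackenby--Meyerhoff so that $w=2$, and then a genus inequality gives a contradiction) or a torus knot (reduced to \cite[Proposition~2.5]{NiZhang}).

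A secondary point: your first step asserts that ``the mapping cone formula recovers $\widehat{HFK}(K)$ from the surgery'' because $p/q$ is large relative to $2g(T_{r,s})-1$. This glosses over the Spin$^c$--labeling ambiguity: the homeomorphism $S^3_{p/q}(K)\cong S^3_{p/q}(T_{r,s})$ identifies correction terms only up to an affine permutation of $\spinc$ structures, and extracting $\Delta_K=\Delta_{T_{r,s}}$ from that is exactly the nontrivial content of McCoy's theorem \cite[Theorem~1.1]{McCoySharp} (and is why the specific bound $4g(T_{r,s})+4$ appears). The paper simply cites McCoy here; you should do the same rather than appeal to the mapping cone directly.
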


Note that Corollary~\ref{cor:CharSlope} contains one previous unknown case of finite surgery: the $13$--surgery on $T_{5,2}$, which is an $I$--type spherical manifold.

\begin{rem}
The bound $4g(T_{r,s})+4$ comes from a theorem of McCoy \cite[Theorem~1.1]{McCoySharp}, which says $g(K)=g(T_{r,s})$ if $S^3_{p/q}(K)\cong S^3_{p/q}(T_{r,s})$ and $\frac pq\ge4g(T_{r,s})+4$. As mentioned in \cite[Remark~3.9]{McCoySharp}, it is possible to lower the bound $4g(T_{r,s})+4$. Thus it is conceivable to lower the bound in Corollary~\ref{cor:CharSlope} to $4g(T_{r,s})$. However, this cannot hold for all torus knots, since $S^3_{21}(T_{5,4})\cong S^3_{21}(T_{11,2})$ \cite[Example~1.1]{NiZhang}, and $21=4g(T_{11,2})+1$. So we will not seek a sharper bound for simplicity. Instead, we will focus on the surgeries with finite $\pi_1$ not covered by Corollary~\ref{cor:CharSlope}.
\end{rem}

\begin{cor}\label{cor:1417}
The slope $14$ is a characterizing slope for $T_{4,3}$, and the slope $17$ is a characterizing slope for $T_{5,3}$.
\end{cor}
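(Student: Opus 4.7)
The plan is to handle both slopes by the same strategy; I describe the $14$-surgery on $T_{4,3}$ in detail, with the $T_{5,3}$ case running in parallel. Suppose $S^3_{14}(K)\cong S^3_{14}(T_{4,3})$. By Moser's classification the common target is a small Seifert fibered space over $S^2(4,3,2)$; since $\tfrac14+\tfrac13+\tfrac12>1$ its fundamental group is finite (spherical), so the manifold is an L-space. Hence $K$ is an L-space knot, and by the results of Ghiggini and Ni it is fibered; by the criterion recalled in the remark after Theorem~\ref{thm:NotSFS}, its monodromy is right-veering. Because the correction terms of a positive L-space surgery determine the Alexander polynomial of the knot, the identity $S^3_{14}(K)\cong S^3_{14}(T_{4,3})$ forces $\Delta_K=\Delta_{T_{4,3}}$, so $g(K)=g(T_{4,3})=3$. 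The same normalization in the $T_{5,3}$ case gives $g(K)=g(T_{5,3})=4$.

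Now I would apply Theorem~\ref{thm:NotSFS}(1) to eliminate the hyperbolic case. Since $S^3_{14}(K)$ is Seifert fibered, hence non-hyperbolic, and the monodromy of $K$ is right-veering, the theorem yields $14\le 4g(K)=12$, contradicting the genus computation above. For $T_{5,3}$ the corresponding inequality $17\le 4g(K)=16$ is likewise absurd. Therefore $K$ is non-hyperbolic, and being a fibered knot in $S^3$ it is either a torus knot or a (possibly iterated) cable of a non-trivial knot.

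The torus-knot subcase is a small enumeration. The only genus-$3$ torus knots are $T_{4,3}$ and $T_{7,2}$; but the $14$-surgery on $T_{7,2}$ is reducible because $14=rs$ for that knot, while our target is irreducible, so $K=T_{4,3}$. The analogous enumeration for genus $4$ yields $T_{5,3}$ and $T_{9,2}$, and since $17=rs-1$ for $T_{9,2}$, the $17$-surgery there is a lens space rather than the small Seifert fibered space at hand. The main obstacle, which I expect to be the real work, is the cable subcase; I would handle it with Gordon's theorem on surgeries on cables: if $K=C_{a,b}(K')$ with non-trivial companion $K'$ and $a\ge 2$, then $S^3_{r}(K)$ is reducible when $r=ab$ and toroidal otherwise, whereas our target is atoroidal and irreducible. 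Hence no non-trivial cable $K$ can produce $S^3_{14}(T_{4,3})$ or $S^3_{17}(T_{5,3})$, which closes the proof.
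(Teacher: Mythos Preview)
There are two genuine gaps.

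\textbf{The Alexander polynomial step.} Your claim that ``the correction terms of a positive L-space surgery determine the Alexander polynomial of the knot'' is false as a general principle. The correction terms of $S^3_p(K)$ are determined by the torsion coefficients $t_i(K)$ together with the surgery-induced labeling of $\spinc$ structures, but an orientation-preserving homeomorphism $S^3_p(K)\cong S^3_p(T)$ need not respect that labeling. The example $S^3_{21}(T_{5,4})\cong S^3_{21}(T_{11,2})$ mentioned in the paper shows two L-space knots with the same integral surgery manifold but different Alexander polynomials. For $p=14$ and $p=17$ the conclusion $\Delta_K=\Delta_T$ is in fact true, but it requires a case-specific argument; the paper invokes Gu's classification of Alexander polynomials compatible with integral finite surgeries (equivalently, the relevant facts recorded in \cite{NiZhangFinite}). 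Without this, you do not know $g(K)=3$ (resp.\ $4$), and your application of Theorem~\ref{thm:NotSFS} to exclude hyperbolic $K$ collapses: from $14\ge 2g(K)-1$ you only get $g(K)\le 7$, which does not contradict $14\le 4g(K)$.

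\textbf{The satellite step.} Two errors here. First, it is not true that a fibered (or L-space) satellite knot must be an iterated cable: one-bridge braids in a solid torus are fibered patterns that are not torus knots, and they produce fibered satellite knots that are not cables. Second, even for genuine cables your statement of Gordon's theorem is wrong. For the $(a,b)$-cable of $K'$ there is a third regime: when $|p-qab|=1$ the surgered solid torus is again a solid torus and $S^3_{p/q}(K)\cong S^3_{p/(qa^2)}(K')$, which is typically irreducible and atoroidal. So ``reducible or toroidal'' does not exhaust the possibilities, and your argument does not close. The paper avoids all of this by appealing to the classification of finite surgeries on satellite knots due to Boyer--Zhang and Bleiler--Hodgson, from which one checks directly that no satellite knot has $14$- or $17$-surgery equal to the relevant spherical space form.

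Your handling of the torus-knot subcase is fine (and in fact once $\Delta_K=\Delta_T$ is established, the enumeration is unnecessary since torus knots are determined by their Alexander polynomials). The overall architecture matches the paper's proof; the missing ingredients are precisely the two cited inputs above.
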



A particularly interesting case is $s=2$, $r=2n+1$, where $n=g(T_{r,2})$. The slopes $4n+t$, $t=0,1,2,3,4$, are special. The $(4n+2)$--surgery is reducible. The $(4n+1)$ and $(4n+3)$--surgeries are lens spaces $L(4n+1,4)$ and $L(4n+3,4)$. The $4n$ and $(4n+4)$--surgeries are prism manifolds.
It follows from a theorem of Greene \cite{GreeneCabling} that $4n+2$ is characterizing. Baker \cite{Baker} proved that if $S^3_{p}(K)$ is a lens space with $p\ge4g(K)-1$, then $K$ is a doubly primitive knot. Building on Baker's work, Rasmussen \cite{RasBerge} showed that $4n+3$ is characterizing.
The fact that $4n$ and $4n+4$ are characterizing slopes was proved by Ni--Zhang \cite{NiZhangFinite}. 

Greene \cite{GreeneBerge} proved that the list of doubly primitive knots given by Berge \cite{Berge} is complete. Combined with Baker's work, one can answer the question whether $4n+1$ is a characterizing slope for $T_{2n+1,2}$, but the author did not find such statement in the literature. In this paper, we will provide an answer to this question without using Baker's work.

\begin{thm}\label{thm:4n+1}
If the $(4n+1)$--surgery on a knot $K\subset S^3$ is $L(4n+1,4)$, then either $K=T_{2n+1,2}$ or $n=5$ and $K$ has the same knot Floer homology as $T_{5,4}$. 
\end{thm}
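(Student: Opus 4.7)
The plan is to use that $L(4n+1,4)$ is an L-space to force $K$ to be an L-space knot, compute $\widehat{HFK}(K)$ from the $d$-invariants of the surgery, and then eliminate stray possibilities via Theorem~\ref{thm:NotSFS}.

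Since $L(4n+1,4)$ is an L-space, $K$ is an L-space knot; in particular $K$ is fibered with right-veering monodromy, its Alexander polynomial $\Delta_K$ has the rigid form $\sum_{j=0}^{2\ell}(-1)^{\ell-j}t^{n_j}$ for a symmetric exponent sequence $n_0<n_1<\cdots<n_{2\ell}$, and $\widehat{HFK}(K)$ is determined by $\Delta_K$. Thus it suffices to identify $\Delta_K$. To do this I would invoke the Ozsv\'ath--Szab\'o large surgery formula,
\[
d\bigl(L(4n+1,4),s\bigr)\;=\;d\bigl(L(4n+1,1),\sigma(s)\bigr)\;-\;2\,t_{\sigma(s)}(K),
\]
where $\sigma$ is an affine identification of spin-c structures on the surgery with integers in $\{-2n,\ldots,2n\}$. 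Both collections of $d$-invariants are explicitly computable via the standard recursion for lens spaces, and for an L-space knot the non-negative sequence $\{t_s(K)\}_{s\ge 0}$ determines $\Delta_K$ through the inversion of $t_s=\sum_{j\ge 1}j\,a_{s+j}$.

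The expected combinatorial outcome is that the only $\Delta_K$ compatible with both the resulting $\{t_s\}$-data and the L-space $\pm 1$-alternating constraint is $\Delta_{T_{2n+1,2}}$, except when $n=5$, where a second solution $\Delta_{T_{5,4}}$ also appears; this second solution reflects the coincidence $16\equiv 4^{-1}\pmod{21}$, which makes $L(21,16)\cong L(21,4)$ and so places $T_{5,4}$ itself among the knots whose $21$-surgery is $L(21,4)$. If $\Delta_K=\Delta_{T_{5,4}}$, we conclude that $\widehat{HFK}(K)=\widehat{HFK}(T_{5,4})$ and we are in the exceptional case. Otherwise $\Delta_K=\Delta_{T_{2n+1,2}}$, so $g(K)=n$; Theorem~\ref{thm:NotSFS}(1) then prohibits $K$ from being hyperbolic, since that would force $4n+1\le 4g(K)=4n$. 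Hence $K$ is non-hyperbolic. Combining the cyclic surgery theorem of Culler--Gordon--Luecke--Shalen with the classifications of lens space surgeries on torus knots (Moser) and on cables of torus knots (Bleiler--Litherland) leaves only $K=T_{2n+1,2}$ among non-hyperbolic candidates of genus $n$ with $S^3_{4n+1}(K)\cong L(4n+1,4)$.

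The main obstacle is the combinatorial/arithmetic step of extracting candidate Alexander polynomials from the $d$-invariants: one must verify that essentially only one ordering of the $t_s$'s produces a valid $\pm 1$-alternating polynomial, with a second ordering appearing precisely when $n=5$. The $d$-invariant computation itself is mechanical, but isolating the admissible L-space Alexander polynomials and pinpointing $n=5$ as the sole arithmetic degeneracy requires a careful case analysis driven by the behavior of $4$ modulo $4n+1$.
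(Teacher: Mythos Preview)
Your overall architecture matches the paper exactly: first pin down $\Delta_K$, then run the hyperbolic/torus/satellite trichotomy, using Theorem~\ref{thm:NotSFS} to kill the hyperbolic case and the genus bound to kill the cable case. The genuine divergence is in how $\Delta_K$ is determined. You propose to read off the torsion coefficients $t_s(K)$ from the $d$--invariant formula and then solve for an L-space Alexander polynomial; you correctly flag this as the hard step and do not actually carry it out. The paper sidesteps this computation entirely via Greene's changemaker machinery: $L(4n+1,4)=-L(4n+1,n)$ bounds the linear plumbing with weights $(-5,-2,\dots,-2)$, so the intersection lattice $\Lambda=(\sigma)^\perp$ has a basis of one norm--$5$ vector and $(n-1)$ norm--$2$ vectors. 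The norm--$2$ vectors force $n$ coordinates of the changemaker vector $\sigma$ to coincide, and the norm--$5$ vector then leaves only $\sigma=(1,2,\dots,2)$ or $\sigma=(1,\dots,1,4)$; the latter has $|\sigma|^2=4n+1$ only when $n=5$. This yields $\Delta_K=\Delta_{T_{2n+1,2}}$ or (for $n=5$) $\Delta_K=\Delta_{T_{5,4}}$ in a few lines, with no case analysis on residues mod $4n+1$ and no need to resolve the affine ambiguity in labeling $\spinc$ structures that your approach would have to confront. In short: your plan is sound, but the paper's lattice argument replaces your ``main obstacle'' with a two--line computation. (Minor point: for the satellite case the relevant input is the Wang/Bleiler--Litherland/Wu classification of lens space surgeries on satellite knots, not CGLS directly; the genus contradiction $g=(u-1)(v-1)+uv>uv=n$ then finishes it, as you indicate.)
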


Using a recent result of Tange \cite{Tange}, we also get the following characterization of $T_{2n+1,2}$ among all knots with lens space surgeries.

\begin{cor}\label{cor:Alexander}
If a knot $K\subset S^3$ admits a positive lens space surgery and
\[
\Delta_K(t)=t^n-t^{n-1}+t^{n-2}+\text{lower order terms},
\]
then $K=T_{2n+1,2}$.
\end{cor}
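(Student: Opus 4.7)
The plan is to reduce the corollary to Theorem~\ref{thm:4n+1} by using Tange's result to identify the surgery slope, and then dispose of the exceptional case with a quick genus comparison.

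First, since $K$ admits a positive lens space surgery, $K$ is an L-space knot. By the results of Ozsv\'ath--Szab\'o together with \cite{NiFibred}, every L-space knot is fibered and its genus equals the top degree of its Alexander polynomial. The hypothesis therefore immediately gives $g(K)=n$. This is the only general fact about L-space knots I will use.

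Next, I apply the theorem of Tange \cite{Tange}. I expect it to say (in the shape needed here) that if a knot $K\subset S^3$ admits a positive lens space surgery and $\Delta_K(t)$ has the form $t^n-t^{n-1}+t^{n-2}+\text{lower order terms}$, then the surgery slope must be $4n+1$ and the resulting lens space must be $L(4n+1,4)$. Granted this, Theorem~\ref{thm:4n+1} applies directly and yields the dichotomy: either $K=T_{2n+1,2}$, or $n=5$ and $K$ has the same knot Floer homology as $T_{5,4}$.

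The exceptional alternative is ruled out by a one-line genus check. If $K$ has the same $\widehat{HFK}$ as $T_{5,4}$, then in particular $g(K)=g(T_{5,4})=(5-1)(4-1)/2=6$, contradicting the equality $g(K)=n=5$ established in the first paragraph. Hence $K=T_{2n+1,2}$, completing the argument.

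The main obstacle is extracting from Tange's paper exactly the statement needed to pin down the surgery slope as $4n+1$. If his theorem is phrased somewhat weaker than this, the gap should be fillable by a standard $d$--invariant argument: the torsion coefficients of $K$ (and hence the $d$--invariants of all sufficiently large positive surgeries on $K$) are computable from $\Delta_K$ via the Ozsv\'ath--Szab\'o formula, and comparing these with the $d$--invariants of lens spaces should single out the slope $4n+1$ together with the identification of the lens space as $L(4n+1,4)$. Once the slope is in place, the remainder of the proof is the bookkeeping indicated above.
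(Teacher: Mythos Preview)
Your approach is essentially the paper's, but your expectation about Tange's theorem is too optimistic, and this creates a genuine gap. What Tange actually gives (as the paper uses it) is that the lens space $L(p,q)$ obtained from $K$ is also the $p$--surgery on $T_{2n+1,2}$; it does \emph{not} single out the slope $4n+1$. Since lens space surgeries on non-torus knots are integral (Cyclic Surgery Theorem), and the integral lens space slopes for $T_{2n+1,2}$ are $4n+1$ and $4n+3$, both slopes are in play. You handle only $p=4n+1$ via Theorem~\ref{thm:4n+1}; the paper handles $p=4n+3$ by citing Rasmussen~\cite{RasBerge}, who proved $4n+3$ is characterizing for $T_{2n+1,2}$. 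Your proposed $d$--invariant fix is not the right patch: the hypothesis only prescribes the top three Alexander coefficients, which is not enough on its own to force the slope to be $4n+1$ rather than $4n+3$.

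Your genus argument eliminating the exceptional $n=5$ branch of Theorem~\ref{thm:4n+1} is correct and in fact more explicit than the paper, which leaves that step to the reader (it is equally immediate from the hypothesis on $\Delta_K$, since $\Delta_{T_{5,4}}$ has vanishing coefficient at $t^{4}$).
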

\begin{proof}
Suppose that the $p$--surgery on $K$ is a lens space $L(p,q)$. By \cite{Tange}, $L(p,q)$ is also the $p$--surgery on $T_{2n+1,2}$. Our conclusion follows from Theorem~\ref{thm:4n+1} and \cite{RasBerge}.
\end{proof}

This paper is organized as follows. In Section~\ref{sect:Prelim}, we briefly recall some results about essential laminations and changemaker lattices. In Section~\ref{sect:SFS}, we prove Theorem~\ref{thm:NotSFS} using essential laminations. Corollaries~\ref{cor:CharSlope} and~\ref{cor:1417} are also proved. In Section~\ref{sect:4n+1}, we prove Theorem~\ref{thm:4n+1} using Greene's theory of changemaker lattices. In the appendix, we prove Theorem~\ref{thm:SecondTerm}, which says that if the second term of the knot Floer homology of a fibered knot has rank $1$, then the monodromy is either right-veering or left-veering.

\vspace{5pt}\noindent{\bf Acknowledgements.}\quad  The author was
partially supported by NSF grant numbers DMS-1252992 and DMS-1811900. The author wishes to thank David Gabai for commenting on an earlier draft of this paper, and referring the author to lots of work on laminations. The author is also grateful to Xingru Zhang for informing him the work of Ying-Qing Wu \cite{WuLamination}.



\section{Preliminaries}\label{sect:Prelim}

In this section, we will collect some results about essential laminations and changemaker lattices we will use.

\subsection{Essential laminations}

Essential laminations were introduced by Gabai and Oertel \cite{GO} as a generalization of incompressible surfaces. There are two special classes of essential laminations: very full laminations (defined by Gabai and Mosher, see \cite{Calegari}) and genuine laminations \cite{GK}. A related concept is essential branched surfaces.
We will not give the definitions here. Instead, we will list the necessary results.

The first result is in Gabai and Oertel's original paper \cite[Proposition~4.5]{GO}.


\begin{prop}[Gabai--Oertel]
A lamination $\lambda$ is essential if and only if a splitting of $\lambda$ is fully carried by an essential branched surface.
\end{prop}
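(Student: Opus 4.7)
The plan is to prove the two directions separately, exploiting the structural correspondence between a lamination and the branched surface obtained by collapsing a fibered neighborhood of it.

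For the ``only if'' direction, I would start with the essential lamination $\lambda$ and build a candidate branched surface $B$ by taking a regular neighborhood $N(\lambda)$ equipped with its transverse $I$--fibered structure and collapsing each $I$--fiber to a point. The resulting $B$ automatically carries $\lambda$, but it may not be essential yet: the complementary regions of $N(\lambda)$ can contain compressing disks, monogons, half-disks of contact, or Reeb-like pieces that turn into the corresponding bad subdiagrams for $B$. The key step is to remove these defects by performing controlled splittings of $\lambda$ (equivalently, adjusting $N(\lambda)$) along properly embedded disks in the complement. Each such splitting replaces $\lambda$ by a sublamination $\lambda'$ whose leaves are still the original leaves of $\lambda$ away from the splitting locus, so incompressibility, end-incompressibility, and irreducibility of the complement pass to $\lambda'$. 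After finitely many splittings (or a limit argument for non-compact splittings), the associated branched surface $B'$ has no disks of contact, no monogons, and irreducible, end-irreducible complementary pieces without Reeb components: exactly the list of conditions for essentiality of $B'$.

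For the ``if'' direction, suppose a splitting $\lambda'$ of $\lambda$ is fully carried by an essential branched surface $B$. I would first observe that splitting is a reversible operation at the level of essentiality: $\lambda'$ is essential if and only if $\lambda$ is, since splitting only replaces a leaf by parallel copies or reroutes along a product region, which preserves incompressibility of leaves and irreducibility of the complement. So it suffices to show $\lambda'$ is essential. Here one transfers each defining property from $B$ to $\lambda'$: a compressing disk for a leaf of $\lambda'$ would, after isotopy into $N(B)$ and straightening transverse to the $I$--fibers, produce a disk of contact in $B$; an end-compressing half-disk would yield a half-disk of contact; a monogon in the complement of $\lambda'$ would project to a monogon in the complement of $B$; and a sphere leaf or Reeb-like behavior would force a corresponding forbidden configuration in $B$. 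Since $B$ has none of these, neither does $\lambda'$.

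The main obstacle, and the place where Gabai--Oertel's original argument puts in real work, is the ``only if'' direction: showing that one can actually eliminate all the bad complementary features of the initially constructed $B$ by splitting $\lambda$, without destroying either the lamination or the property that the final branched surface fully carries the result. In particular, one must argue that disks of contact and monogons in the complement of $N(\lambda)$ can always be pushed into the lamination and used to guide a legal splitting, and that this process terminates (or converges) to a branched surface satisfying all the essentiality conditions simultaneously. The rest of the proof is then a careful but routine translation of conditions across the collapse map $N(\lambda)\to B$.
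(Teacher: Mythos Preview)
The paper does not supply its own proof of this proposition: it is stated in Section~\ref{sect:Prelim} purely as a citation of \cite[Proposition~4.5]{GO}, with no argument given. So there is nothing in the paper to compare your proposal against; your sketch is an attempt at reconstructing Gabai and Oertel's original argument, not at matching anything the present paper does.

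As a sketch of the Gabai--Oertel proof, your outline is in the right spirit---collapse a fibered neighborhood to get a branched surface, then split to kill disks of contact and other pathologies---but be aware that the actual content of \cite[Proposition~4.5]{GO} lies precisely in the part you flag as ``the main obstacle.'' Simply asserting that one can split away all disks of contact and monogons, and that this process terminates or converges while preserving full carrying, is where essentially all the work goes; your description does not indicate how you would control an infinite sequence of splittings or why no new defects are introduced. If you actually needed to prove this result, you would have to supply that argument in detail. For the purposes of the present paper, however, no proof is required: the proposition is quoted as background.
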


The next theorem is due to Brittenham and Claus \cite{Claus}. See \cite[Corollary~4]{Brittenham1} and the paragraph after it. 

\begin{thm}[Brittenham, Claus]\label{thm:Brittenham}
If a Seifert fibered space with base orbifold $S^2(a,b,c)$ contains an essential lamination, then it also contains a taut foliation. 
\end{thm}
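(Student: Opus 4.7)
The plan is to apply Brittenham's normal-form theorem for essential laminations in Seifert fibered spaces, then reduce to the horizontal case and extend to a foliation. Brittenham shows that after isotopy and splittings, every essential lamination $\lambda$ in a Seifert fibered space $M$ can be brought into a position in which either $\lambda$ contains a sublamination saturated by Seifert fibers (vertical), or $\lambda$ is everywhere transverse to the fibers (horizontal).

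First I would rule out the vertical case using the assumption that the base is $S^2(a,b,c)$. Since $M$ is closed, a two-dimensional vertical sublamination consists of tori and Klein bottles whose images under the Seifert projection are embedded simple closed curves in the base orbifold. But every simple closed curve in $S^2(a,b,c)$ bounds a disk containing at most one cone point, so its vertical preimage is either compressible or a boundary-parallel cusp of an exceptional fiber, in either case contradicting essentiality. Hence $\lambda$ may be taken horizontal.

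Next I would extend the horizontal lamination $\lambda$ to a taut foliation $\mathcal F$ of $M$. Each complementary region of $\lambda$ is an $I$-bundle whose $I$-fibers are subarcs of Seifert fibers, with standard local models near the exceptional fibers; foliating every such region by leaves parallel to its boundary extends $\lambda$ to a foliation of $M$ everywhere transverse to the Seifert fibration. Tautness is automatic: every leaf of $\mathcal F$ meets some regular Seifert fiber (the fibers sweep out all of $M$), and each regular fiber is a closed loop transverse to $\mathcal F$, hence a closed transversal through every leaf it meets.

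The main obstacle is Brittenham's normal-form theorem itself, whose proof requires careful control of how $\lambda$ intersects fibered solid-torus neighborhoods of regular and exceptional fibers and repeated use of essentiality to perform the required isotopies and splittings. A secondary technical point, addressed in the paper of Claus, is verifying that the extension across complementary regions near the exceptional fibers actually produces a foliation transverse to the Seifert structure; once this local model is in place, the global assembly into $\mathcal F$ and the verification of tautness are routine.
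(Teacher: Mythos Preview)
The paper does not supply its own proof of this theorem; it is quoted as a known result due to Brittenham and Claus, with a pointer to \cite[Corollary~4]{Brittenham1} and Claus's thesis. Your outline is essentially a faithful sketch of the argument in those references: Brittenham's normal-form theorem puts an essential lamination in a Seifert fibered space into vertical or horizontal position, the small base $S^2(a,b,c)$ rules out vertical sublaminations since every simple closed curve there bounds a disk with at most one cone point, and a horizontal lamination is then completed across its $I$-bundle complementary regions to a foliation transverse to the fibers, hence taut. So there is nothing to compare against in the present paper, and your proposal correctly reproduces the cited proof.
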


Let $K\subset S^3$ be a hyperbolic knot. If $\lambda$ is a very full lamination in the knot complement, one can define the {\it degeneracy locus} $d(\lambda)$ which is in the form $\frac mn$. Here $m,n$ are integers which are not necessarily relatively prime, $(m,n)\ne(0,0)$. Let $c=\gcd(m,n)$, the number $\frac {m/c}{n/c}\in\mathbb Q\cup\{\infty\}$ is called the {\it degeneracy slope}. Gabai and Mosher proved that any hyperbolic knot complement contains a very full lamination.

When $K$ is fibered, the stable lamination transverse to the fibration is very full. In this case, $\frac1{d(\lambda)}\in\mathbb Q$ is also known as the {\it fractional Dehn twist coefficient} of the monodromy \cite{HKM}. The following proposition can be found in \cite[Proposition~3.1]{HKM}.

\begin{prop}[Honda--Kazez--Mati\'c]\label{prop:RV}
If $K$ is a hyperbolic fibered knot, then the monodromy of $K$ is right-veering if and only if $\frac1{d(\lambda)}>0$.
\end{prop}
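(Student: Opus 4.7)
The plan is to exploit the Nielsen--Thurston representative of the monodromy and the action it induces on the universal cover. Because $K$ is hyperbolic and fibered, the monodromy $\phi$ is freely isotopic to a pseudo-Anosov $\Phi$ with stable foliation whose leaves lift to $\lambda$. The fractional Dehn twist coefficient $c(\phi)=1/d(\lambda)$ records the rotation of $\phi$ along $\partial F$ relative to the prong-fixing representative $\Phi$. I would first install the following formal framework: lift $\phi$ (with $\phi|_{\partial F}=\mathrm{id}$) to the map $\widetilde{\phi}\co\widetilde F\to\widetilde F$ on the universal cover which fixes a chosen component $\ell\subset \pi^{-1}(\partial F)$ pointwise. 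Then identify $\ell\cong\mathbb R$ via the generator of the deck group stabilizing $\ell$ (a positive boundary Dehn twist corresponding to translation by $+1$), and read off $c(\phi)$ as the translation number of $\widetilde\phi$ measured on any component of $\pi^{-1}(\partial F)$ distinct from $\ell$ modulo the identification with $\ell$.

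Next I would convert the right-veering condition into a statement about the circle at infinity of $\widetilde F$. Pick $x\in\partial F$ and a properly embedded arc $a\subset F$ based at $x$; lift $a$ to $\widetilde a\subset\widetilde F$ starting at some $\widetilde x\in\ell$. The arc $\widetilde a$ terminates on $\partial F$ away from $\widetilde x$, giving a distinguished point $p(a)\in\pi^{-1}(\partial F)\setminus\{\widetilde x\}$; and since $\widetilde\phi(\widetilde x)=\widetilde x$, the image $\widetilde{\phi(a)}$ starts at $\widetilde x$ and ends at $p(\phi(a))=\widetilde\phi(p(a))$. A straightforward but careful local check at $x$ shows that $\phi(a)$ is to the right of $a$ at $x$ iff $p(\phi(a))$ lies strictly above $p(a)$ in the natural linear order on the collection of endpoints $\pi^{-1}(\partial F)\setminus\{\widetilde x\}$ induced by the $\mathbb R$--coordinate on $\ell$ (with the convention that ``right'' corresponds to positive translation).

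With this dictionary in place the two implications become transparent. If $c(\phi)>0$, then the lift $\widetilde\phi$ shifts every component of $\pi^{-1}(\partial F)\setminus\{\ell\}$ strictly in the positive direction along $\ell$, so $p(\phi(a))>p(a)$ for every $a$, proving $\phi$ is right-veering; the case $c(\phi)<0$ is symmetric and yields left-veering. If $c(\phi)=0$, then $\widetilde\phi$ has a fixed point on some other component of $\pi^{-1}(\partial F)$, which corresponds geometrically to an invariant prong of the stable lamination at $x$, and one produces an arc $a$ running along this prong whose image $\phi(a)$ is isotopic to $a$ (or even lies slightly to the left), so $\phi$ fails to be strictly right-veering. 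Combining these cases proves the stated equivalence.

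The main obstacle is the bookkeeping in the second paragraph: one must precisely match the cyclic order on the prong directions at $x$ (coming from the pseudo-Anosov singular foliation) with the ordering of lifts on $\partial_\infty\widetilde F$, and verify that the local ``right of $a$ at $x$'' notion of Figure~\ref{fig:ToRight} agrees with the sign convention that makes a single positive boundary Dehn twist have $c=+1$. Once this orientation convention is pinned down, the remainder is a direct translation between the combinatorics of arcs and the translation number of $\widetilde\phi$ on $\ell$.
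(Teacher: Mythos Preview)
The paper does not supply its own proof of this proposition; it is quoted directly as \cite[Proposition~3.1]{HKM} and used as a black box. Your outline is essentially the argument Honda--Kazez--Mati\'c give there: lift to the universal cover, translate ``$\phi(a)$ is to the right of $a$ at $x$'' into an order relation on lifted endpoints, and identify the fractional Dehn twist coefficient with a translation number along the fixed boundary lift $\ell$.

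The one place your sketch is genuinely incomplete is the case $c(\phi)=0$. Finding an invariant prong and an arc $a$ along it with $\phi(a)$ isotopic to $a$ does \emph{not} show that $\phi$ fails to be right-veering, because the definition in the paper explicitly counts ``isotopic to $a$'' as ``to the right of $a$''. You must actually exhibit an arc sent strictly to the left. The standard mechanism (and the one in \cite{HKM}) is to use the local attracting/repelling dynamics of the pseudo-Anosov $\Phi$ near a fixed prong: an arc based at $x$ and starting just to one side of an attracting stable prong is dragged toward that prong by $\Phi$, hence lands strictly to the left of where it began. Your parenthetical ``(or even lies slightly to the left)'' gestures at this, but since this is precisely the non-obvious half of the equivalence, the mechanism should be spelled out rather than asserted. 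A similar remark applies, to a lesser extent, to the forward direction: the assertion that $c(\phi)>0$ forces $\widetilde\phi$ to shift \emph{every} lifted endpoint strictly positively (rather than merely having positive average translation) relies on the specific form $\phi\simeq\Phi\circ\partial$-twist and the fact that $\Phi$ itself has translation number zero with only finitely many neutral fixed directions (the prongs); this deserves a sentence of justification.
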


As a result, the monodromy of $K$ is left-veering if and only if $\frac1{d(\lambda)}<0$. If the monodromy is neither right-veering nor left-veering, then $d(\lambda)=\frac m0$.

The following theorem was due to Gabai \cite[Theorem~8.8]{GabaiProblems}.

\begin{thm}[Gabai]\label{thm:Bound}
Let $K\subset S^3$ be a hyperbolic knot, $\lambda$ be a very full lamination in its complement, then the degeneracy locus $d(\lambda)$ is either $\frac m0$ or $\frac m1$ for an integer $m$ with $|m|\le 4g(K)-2$. If $K$ is fibered,
$\lambda$ is the stable lamination transverse to the fibration, and $d(\lambda)=\frac m1$, then $|m|\ge2$.
\end{thm}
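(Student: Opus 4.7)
The plan is to read off the degeneracy locus from the train-track structure that $\lambda$ induces on a horospherical torus cross-section $T$ of the cusp of $S^3\setminus K$. Because $\lambda$ is very full, the complementary regions of this train track are ideal polygons whose ``prong'' directions all lie along a single line in $H_1(T;\mathbb{R})$. Writing that homology class in the meridian–longitude basis gives a well-defined pair $(m,n)$, which is what $d(\lambda)=m/n$ records. The first task is to prove $n\in\{0,1\}$. Here I would use that in a knot exterior the longitude is the unique rationally null-homologous slope: an essential lamination must have its cusp-boundary trace compatible with a framing in which either the prong direction is purely meridional ($n=0$) or carries the longitude exactly once ($n=1$); higher denominators would force the branched surface carrying $\lambda$ to violate the intersection pairing with a Seifert surface.

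For the bound $|m|\le 4g(K)-2$, the key idea is an Euler characteristic calculation on a minimal-genus Seifert surface $F$. After isotoping $F$ into general position with respect to a branched surface $B$ that fully carries a splitting of $\lambda$, the intersection $\lambda\cap F$ inherits a singular foliation whose index data reads off $m$ as the algebraic number of times a boundary prong wraps longitudinally in one meridian period. A Poincar\'e–Hopf count on this singular foliation of $F$ turns this wrapping number into a sum of local indices totalling $-\chi(F)$, and the sharp bookkeeping (each wrap costs at least $1/2$ in index) produces
\[
|m|\le 2\,|\chi(F)|=2(2g(K)-1)=4g(K)-2.
\]

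For the lower bound $|m|\ge 2$ when $K$ is fibered and $\lambda$ is the stable lamination, I would invoke the identification of $1/d(\lambda)$ with the fractional Dehn twist coefficient $c(\phi)$ of the pseudo-Anosov monodromy $\phi$ (Proposition~\ref{prop:RV} already records one half of this dictionary). Then $d(\lambda)=m/1$ means $c(\phi)=1/m$. If $|m|\le 1$ then $|c(\phi)|\ge 1$, and a Thurston/Honda–Kazez–Mati\'c argument forces $\phi$ to differ from a power of the boundary Dehn twist by something reducible or periodic, contradicting pseudo-Anosovness and hence the hyperbolicity of $K$. In particular, $m=0$ would make $\phi$ freely isotopic to a map fixing the boundary pointwise with trivial rotation, again incompatible with a hyperbolic fibration.

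The main obstacle I anticipate is the middle step: making the Euler characteristic count rigorous in the non-fibered case, where there is no canonical invariant singular foliation on the fiber to Poincar\'e–Hopf against. Gabai's approach replaces this with sutured manifold technology — decomposing along $F$ and tracking how the degeneracy slope interacts with each disk decomposition piece — and the delicate point is to verify that the index contributions along $\partial F$ stay under control throughout the decomposition. Translating ``how $\lambda$ meets $T$'' into ``how $\lambda$ meets $F$'' cleanly enough to extract $4g(K)-2$ (rather than some weaker multiple) is where the real work lies.
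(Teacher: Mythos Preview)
The paper does not prove Theorem~\ref{thm:Bound}; it is quoted from Gabai \cite{GabaiProblems} as background, so there is no argument here to compare against. That said, two of the three pieces of your sketch have genuine errors, not just incompleteness.

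\textbf{The bound $n\in\{0,1\}$.} Your appeal to ``intersection pairing with a Seifert surface'' is the wrong mechanism. The longitude is what pairs with a Seifert surface, but $n$ is the intersection of the degeneracy locus with the \emph{meridian}. The standard argument runs through the meridian filling: $\Delta(d(\lambda),1/0)=|n|$, so if $|n|\ge 2$ then, by the general (Gabai--Mosher) persistence result behind Theorem~\ref{thm:Lamination}, the lamination would remain essential after meridional filling and give an essential lamination in $S^3$. That is impossible (the universal cover of a $3$--manifold with an essential lamination is $\mathbb{R}^3$). Nothing about Seifert surfaces enters.

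\textbf{The bound $|m|\ge 2$.} Your argument is wrong, not merely sketchy. Having $|c(\phi)|\ge 1$ does \emph{not} contradict pseudo-Anosovness: if $\psi$ is pseudo-Anosov on $F$ then so is $T_{\partial F}^{N}\circ\psi$ for every integer $N$, and its fractional Dehn twist coefficient is $c(\psi)+N$. So pseudo-Anosov monodromies with $|c|\ge 1$ are plentiful; they simply correspond to fibered knots in $3$--manifolds other than $S^3$. The inequality $|m|\ge 2$ (equivalently $|c(\phi)|\le\tfrac12$) genuinely uses the ambient $S^3$, and your sketch never invokes that hypothesis. The sentence about $m=0$ is also off: since $|m|$ equals the number of boundary prongs of the invariant foliation, $m=0$ is already excluded for any pseudo-Anosov.

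\textbf{The bound $|m|\le 4g(K)-2$.} Here your instinct is sound in the fibered case: $|m|$ is exactly the number $p$ of boundary prongs of the stable foliation, and collapsing $\partial F$ to a point and running Euler--Poincar\'e on the closed genus-$g$ surface (interior singularities have $\ge 3$ prongs) gives $2-2g\le 1-p/2$, i.e.\ $p\le 4g-2$. You are right that the non-fibered case is where the substance lies, and your final paragraph is honest speculation rather than a proof.
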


If $\frac pq$ is a slope, and $d(\lambda)=\frac mn$, define $\Delta(d(\lambda),\frac pq)=|pn-qm|$. The first part of the following theorem can be found in \cite[Theorem~5.3]{GO}, and the same proof yields the second part. Gabai was aware of the result about genuine laminations which was explicitly stated by Brittenham \cite{Brittenham3}. Gabai and Mosher also proved this theorem for any hyperbolic cusped manifolds, see
\cite[Theorem~6.48]{Calegari}.

\begin{thm}[Gabai]\label{thm:Lamination}
Let $K\subset S^3$ be a hyperbolic fibered knot, $\lambda$ be the stable lamination transverse to the fibration.
If $\Delta(d(\lambda),\frac pq)\ge2$, $\lambda$ will be an essential lamination in $S^3_{p/q}(K)$. If $\Delta(d(\lambda),\frac pq)\ge3$, $\lambda$ will be a genuine lamination in $S^3_{p/q}(K)$.
\end{thm}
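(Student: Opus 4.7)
The plan is to follow the Gabai--Oertel surgery construction: the stable lamination $\lambda$ is carried by an essential branched surface $B$ in the knot exterior $M = S^3\setminus\nu(K)$, and the task is to cap off the induced train track on $\partial M$ by meridian disks of the surgery solid torus in such a way that the extended branched surface $B' \subset S^3_{p/q}(K)$ remains essential (resp.\ genuine). First I would recall the boundary structure. The lamination $\lambda$ meets $\partial M$ in a train track $\tau$; the degeneracy locus $d(\lambda)=\tfrac mn$ records the isotopy class, weighted by branch weights, of the curves along which the complementary regions of $\tau$ in $\partial M$ pinch down — equivalently, the ``cusp direction'' of the interstitial bundles at infinity. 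The quantity $\Delta(d(\lambda),\tfrac pq)=|pn-qm|$ is then exactly the geometric intersection number between the surgery meridian $\mu_{p/q}$ and the degeneracy locus.

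Next I would build the branched surface in the filled manifold. Using the standard trick, I isotope $\mu_{p/q}$ to be carried by $\tau$, then attach a meridian disk $D$ of the solid torus $V$ along a curve carried by $\tau$ in the class $p\mu + q\lambda_{\mathrm{long}}$. The resulting branched surface $B' = B\cup D$ in $S^3_{p/q}(K)$ carries a lamination obtained from $\lambda$ by filling in the new complementary regions in $V$ with meridian-disk leaves. The key point is that the new complementary regions of $B'$ lying inside $V$ are solid tori with a sutured structure determined by how $\mu_{p/q}$ sits relative to $\tau$.

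Now I would check the conditions for essentiality one by one, using the hypothesis $\Delta \geq 2$. Essentiality of $B$ in $M$ already rules out disks of contact, half-disks of contact, reducing spheres, and monogons in the complementary regions of $B$; one must rule out only the \emph{new} monogons and disks of contact created inside $V$. A monogon in $V$ would come from a meridional disk whose boundary intersects the degeneracy locus only once, which is excluded precisely by $\Delta \ge 2$. Similarly, $\Delta\ge 2$ forces the new sutures on $\partial V$ to wind around enough that the solid-torus complementary region admits no compressing or end-compressing disks disjoint from the sutures. Combined with the essentiality of $B$, this gives that $B'$ is an essential branched surface, so by the Gabai--Oertel proposition $\lambda$ extends to an essential lamination in $S^3_{p/q}(K)$.

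For the second statement, I would upgrade to genuineness (in the sense of Gabai--Kazez) by showing that some complementary region of $\lambda$ in $S^3_{p/q}(K)$ has non-trivial guts. The complementary regions in the original exterior already have interstitial bundle structure determined by $\tau$; filling with $\Delta\ge 3$ meridional disks means the solid-torus complementary regions in $V$ receive at least three cusp arcs, so they cannot be an $I$-bundle or solid-torus annulus piece, producing genuine guts. The main obstacle I anticipate is the careful bookkeeping in the previous step: one must verify that no end-compressing disk appears for the complementary regions in $V$, which requires translating the arithmetic condition $|pn-qm|\ge 2$ into a combinatorial condition on how $\mu_{p/q}$ meets the cusps of $\tau$, exactly as in the proof of \cite[Theorem~5.3]{GO}.
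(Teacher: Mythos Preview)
The paper does not give its own proof of this theorem; it simply attributes the first part to \cite[Theorem~5.3]{GO}, observes that the same argument yields the genuine-lamination statement (as made explicit by Brittenham \cite{Brittenham3}), and points to \cite[Theorem~6.48]{Calegari} for the general cusped case. Your sketch is precisely an outline of that Gabai--Oertel surgery argument---cap off the essential branched surface carrying $\lambda$ by a meridian disk of the filling torus, then verify that the complementary solid-torus region acquires no monogon (which needs $\Delta\ge2$) and has at least three cusps, hence nontrivial guts (which needs $\Delta\ge3$)---so you are reproducing the cited proof rather than diverging from it. As a write-up it is fine as a sketch, though the step ruling out new disks of contact and end-compressions inside $V$ would need to be made precise if you intended a self-contained proof; since the paper is content to cite the result, your level of detail already exceeds what the paper provides.
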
 

When a closed atoroidal manifold $Y$ contains a genuine lamination, Gabai and Kazez \cite{GK} proved that $\pi_1(Y)$ is word-hyperbolic. For our purpose, we only need the following theorem of Wu (see \cite[Theorem~2.5]{WuLamination} and the paragraph preceding it).

\begin{thm}[Wu]\label{thm:Wu}
Suppose that $K\subset S^3$ is a hyperbolic knot. Let $B$ be an essential branched surface in $S^3\setminus K$, and let $\gamma_0$ be a degeneracy slope corresponding to $B$. If $\Delta(\gamma_0,\frac pq)>2$, then $S^3_{\frac pq}(K)$ is hyperbolic.
\end{thm}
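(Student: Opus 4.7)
The plan is to prove hyperbolicity of $S^3_{p/q}(K)$ by showing the surgered manifold is irreducible, atoroidal, and not Seifert fibered; Perelman's geometrization theorem then gives the conclusion. The main tool will be a \emph{genuine} essential lamination in the surgered manifold obtained from $B$.

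First I would recall the surgery construction for branched surfaces from \cite[Section~5]{GO}: capping off the train tracks $B\cap\partial N(K)$ by meridian disks of the surgery solid torus produces a branched surface $\widehat B\subset S^3_{p/q}(K)$. Using the hypothesis $\Delta(\gamma_0,p/q)>2$, one checks by the Gabai--Oertel criteria that $\widehat B$ has no disk of contact, no half-disk of contact, no sphere or Reeb component, and that it fully carries a lamination $\widehat \lambda$. The strict inequality $\Delta(\gamma_0,p/q)>2$ moreover forces at least one complementary region of $\widehat \lambda$ to fail to be an $I$-bundle, since otherwise a straightforward boundary-train-track computation would show the leaves of $\widehat\lambda$ extending across the surgery solid torus only when the filling slope matches $\gamma_0$ up to $\Delta\le 2$. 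Hence $\widehat\lambda$ is a genuine essential lamination. This is the general (not necessarily fibered) analogue of Theorem~\ref{thm:Lamination}, due to Gabai and Mosher.

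From the existence of $\widehat\lambda$ I would draw three structural conclusions. Irreducibility of $S^3_{p/q}(K)$ is immediate from \cite{GO}. To exclude a Seifert fibered structure, note that any essential lamination in a Seifert fibered space can, after isotopy, be put into horizontal or vertical position; a horizontal lamination is a taut foliation (which is compatible with Theorem~\ref{thm:Brittenham}), while a vertical one has $I$-bundle complementary regions, and neither option is genuine. So $S^3_{p/q}(K)$ is not Seifert fibered. To exclude essential tori, I would put an alleged essential torus $T\subset S^3_{p/q}(K)$ into standard normal form with respect to $\widehat \lambda$; using the Gabai--Kazez analysis \cite{GK} of tori in manifolds with genuine laminations, after isotopy $T$ either is carried by $\widehat B$ or meets $\widehat\lambda$ transversely in an essential train track. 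Pushing $T$ off the surgery solid torus produces an essential surface in $S^3\setminus K$ whose boundary slope is controlled by $\gamma_0$, and the resulting inequality contradicts $\Delta(\gamma_0,p/q)>2$.

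The hardest step will be the last one, ruling out essential tori: one must track the boundary slope of the surface recovered from $T$ and bound its distance from $\gamma_0$ by $2$, which is the technical heart of Wu's argument in \cite{WuLamination} and relies on a careful normal-form analysis of $T$ relative to the genuine lamination. Once irreducibility, atoroidality, and absence of Seifert fibration are in hand, geometrization concludes that $S^3_{p/q}(K)$ is hyperbolic.
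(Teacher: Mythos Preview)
The paper does not give a proof of this theorem. It is quoted as a result of Wu \cite{WuLamination}, with only the remark following it that Perelman's resolution of geometrization upgrades Wu's original conclusion ``hyperbolike'' (irreducible, atoroidal, not Seifert fibered) to ``hyperbolic''. There is therefore no proof in the paper to compare your proposal against.

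That said, a few comments on your sketch. The global architecture---produce a genuine lamination in the filled manifold, deduce irreducibility, atoroidality, and non--Seifert-fiberedness, then invoke geometrization---is the right shape, and matches how the paper packages Wu's statement. However, two of your subarguments are off. In the Seifert fibered step, a horizontal lamination need not be a foliation; what is true is that its complementary regions are $I$--bundles, so it is not genuine. Your claim that vertical laminations have $I$--bundle complements is wrong: the complementary regions are $S^1$--bundles. The cleanest way to exclude a Seifert fibration, once atoroidality is in hand, is simply to invoke Gabai--Kazez \cite{GK}: a closed atoroidal $3$--manifold carrying a genuine lamination has word-hyperbolic fundamental group, which an infinite Seifert fibered space never does.

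In the atoroidal step, the mechanism you describe does not work as stated: if an essential torus $T$ meets the surgery solid torus, removing the meridian disks produces a planar or punctured-torus surface in $S^3\setminus K$ with boundary slope $p/q$, not a slope ``controlled by $\gamma_0$'', so no inequality contradicting $\Delta(\gamma_0,p/q)>2$ falls out of that. You correctly flag this as the technical heart of Wu's paper and ultimately defer to \cite{WuLamination}; that deferral is appropriate, but the heuristic you offer for why it should work is not the actual reason.
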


\begin{rem}
In \cite{WuLamination}, it is stated that $S^3_{\frac pq}(K)$ is ``hyperbolike'', namely, it is irreducible, atoroidal, and is not a Seifert fibered space. However, by Perelman's resolution of the Geometrization Conjecture \cite{P1,P2,P3}, we know such manifolds are hyperbolic.
\end{rem}

\subsection{Changemaker lattices}

In this subsection, we briefly recall the theory of changemaker lattices of Greene \cite{GreeneBerge,GreeneCabling}.

When $Y$ is a rational homology sphere and $\mathfrak t\in\spinc(Y)$, Ozsv\'ath and Szab\'o \cite{OSzAbGr} defined the {\it correction term} $d(Y,\mathfrak t)\in\mathbb Q$ which is an invariant of the pair $(Y,\mathfrak t)$.
If $Y$ is the boundary of a smooth, compact, negative definite $4$--manifold $X$, then
\begin{equation}\label{eq:CorrBound}
 c_1(\mathfrak s)^2 + b_2(X)\le 4d(Y, \mathfrak t),
\end{equation}
for any $\mathfrak s \in \spinc(X)$ that extends $\mathfrak t \in \text{Spin}^c(Y)$.

Suppose that $Y$ is obtained by $p$--surgery on a knot $K\subset S^3$, $p>0$, then $Y$ is the boundary of a $4$--manifold $W=W_p(K)$ which consists of a zero-handle and a two-handle with attaching curve $K$. If $Y$ is also the boundary of a smooth, compact, negative definite, simply connected $4$--manifold $X$ with $b_2(X)=n$, then
the four-manifold $Z := X \cup_Y (-W)$ is a smooth, closed, negative definite, simply connected $4$--manifold with $b_2(Z)=n+1$. (The simple connectedness is not necessary, but it suffices for our purpose.) Donaldson's Diagonalization Theorem \cite{Donaldson1} implies that the intersection pairing on $H_2(Z)$ is isomorphic to $-\mathbb Z^{n+1}$, negative of the standard $(n+1)$-dimensional Euclidean integer lattice. Consequently, negative of the intersection pairing on $X$, denoted $\Lambda$, embeds as a codimension one sub-lattice of $\mathbb Z^{n+1}$.

\begin{defn}
A smooth, compact, negative definite $4$--manifold $X$ is {\it sharp} if for every $\mathfrak t \in \text{Spin}^c(Y)$, there exists some $\mathfrak s\in \spinc(X)$ extending $\mathfrak t$ such that the equality is realized in Equation (\ref{eq:CorrBound}).
\end{defn}

\begin{defn}\label{defn:changemaker}
A vector $\sigma=(\sigma_0,\sigma_1,\dots,\sigma_{n})\in\mathbb Z^{n+1}$ that satisfies $0\le\sigma_0\le\sigma_1\le\cdots\le\sigma_{n}$ is a {\it changemaker vector} if for every $k$, with $0\le k\le\sigma_0+\sigma_1+\cdots+\sigma_{n}$, there exists a subset $S\subset\{0,1,\dots,n\}$
such that $k=\sum_{i\in S}\sigma_i$. A sublattice of $\mathbb Z^{n+1}$ is a {\it changemaker lattice} if it is the orthogonal complement of a changemaker vector.
\end{defn}

The following important theorem was proved by Greene \cite{GreeneCabling}.

\begin{thm}[Greene]\label{thm:Greene}
Suppose that $K\subset S^3$ is an L-space knot, and $Y=S^3_p(K)$ is the boundary of a simply connected sharp $4$--manifold $X$ with $b_2(X)=n$. Let $Z=X\cup_{Y}(-W_p(K))$, and let $Q_Z$ be the intersection form on $Z$. Then there is a lattice isomorphism $(H_2(Z),-Q_Z)\to \mathbb Z^{n+1}$, such that the generator of $H_2(-W_p(K))$ is mapped to a changemaker vector $\sigma$ with $\langle\sigma,\sigma\rangle=p$, and
$H_2(X)$ is mapped to the orthogonal complement $(\sigma)^{\perp}$ of $\sigma$. 
\end{thm}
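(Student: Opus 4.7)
The plan is to apply Donaldson's diagonalization theorem to the closed $4$--manifold $Z$, identify the image of $H_2(-W_p(K))$ as the vector $\sigma$, and then combine the sharpness of $X$ with the L-space surgery formula to force $\sigma$ to be a changemaker vector.

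First I would verify the hypotheses needed for Donaldson. Since $W_p(K)$ is built from a $0$--handle and a single $2$--handle, it is simply connected, and a van Kampen argument shows that $Z = X \cup_Y (-W_p(K))$ is closed, smooth, and simply connected. Because $p>0$, the manifold $W_p(K)$ is positive definite with $b_2=1$, so $-W_p(K)$ is negative definite; combined with the negative definiteness of $X$, this makes $Z$ negative definite with $b_2(Z)=n+1$. Donaldson's theorem \cite{Donaldson1} then supplies a lattice isomorphism $(H_2(Z),-Q_Z)\xrightarrow{\sim}\mathbb{Z}^{n+1}$. Let $\sigma$ denote the image of the generator of $H_2(-W_p(K))$; its self-intersection in $-W_p(K)$ is $-p$, so $\langle\sigma,\sigma\rangle=p$. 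Mayer--Vietoris yields $H_2(Z)\cong H_2(X)\oplus H_2(-W_p(K))$ with the two summands orthogonal under $Q_Z$, so $H_2(X)$ embeds into $\sigma^\perp$, and a rank count gives surjectivity onto $\sigma^\perp$.

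The remaining task is to show that $\sigma$ is a changemaker vector, and this is where the L-space hypothesis enters. Sharpness of $X$ realizes equality in (\ref{eq:CorrBound}): for every $\mathfrak{t}\in\spinc(Y)$ there exists $\mathfrak{s}_X\in\spinc(X)$ restricting to $\mathfrak{t}$ with $c_1(\mathfrak{s}_X)^2+n=4d(Y,\mathfrak{t})$. On the other side, the Ozsv\'ath--Szab\'o large surgery formula, combined with the L-space condition, pins the correction terms $d(Y,\mathfrak{t})$ down to explicit values determined by the torsion coefficients $V_i$ of the Alexander polynomial. Applying (\ref{eq:CorrBound}) to $-W_p(K)$ and using the standard description of its characteristic vectors (odd multiples of a generator when $p$ is odd, arbitrary multiples otherwise) gives the complementary bound. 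Gluing extensions $\mathfrak{s}_X$ and $\mathfrak{s}_{-W}$ across $Y$ produces a $\spinc$ structure on $Z$ whose first Chern class, transported via the Donaldson isomorphism, becomes a characteristic vector of $\mathbb{Z}^{n+1}$.

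Under this dictionary, the correction-term equalities translate into the assertion that within each residue class modulo $2\sigma$ in $\mathbb{Z}^{n+1}$, the minimum of $\|\chi\|^2$ over characteristic vectors $\chi$ is prescribed by the L-space formula. The main obstacle is the combinatorial step of extracting the subset-sum condition of Definition~\ref{defn:changemaker} from these minima. I would exploit the ``staircase'' shape of the $V_i$ for L-space knots, which forces consecutive minima (as one varies the $\sigma$--component of $\chi$) to differ by a single unit realizable by flipping one coordinate of $\sigma$. Chaining these flips produces, for every integer $k\in[0,\sum_i\sigma_i]$, a subset $S\subset\{0,1,\dots,n\}$ with $k=\sum_{i\in S}\sigma_i$, completing the verification that $\sigma$ is a changemaker vector.
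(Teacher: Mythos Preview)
The paper does not supply a proof of this theorem; it is quoted as a result of Greene \cite{GreeneCabling} and used as a black box in Section~\ref{sect:4n+1}. Your outline is essentially a reconstruction of Greene's own argument: Donaldson diagonalization to obtain the embedding into $\mathbb{Z}^{n+1}$, sharpness of $X$ together with the L-space surgery formula to force equality in the correction-term inequalities for every $\spinc$ structure, and then the combinatorial extraction of the changemaker condition from the resulting constraints on characteristic minima. One point worth tightening: your ``rank count'' only shows that the image of $H_2(X)$ has full rank in $\sigma^\perp$, not that it coincides with $\sigma^\perp$ as a lattice; for that you need primitivity of $H_2(X)$ in $H_2(Z)$, which follows from the long exact sequence of the pair $(Z,X)$ and the vanishing of $H_1(-W_p(K))$.
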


One can recover the normalized Alexander polynomial $\Delta_K(t)=\sum_{i}a_it^i$ from the changemaker vector \cite[Lemma~2.5]{GreeneCabling}.

\begin{lem}\label{lem:Alex}
The torsion coefficients of $K$ can be determined by
\[
t_i(K)=
\left\{
\begin{array}{cl}
\displaystyle\min_{\mathfrak c}\frac{\mathfrak c^2-n-1}8, &\text{for each $i\in\{0,1,\dots,\lfloor\frac p2\rfloor\}$,}\\
&\\
0,&\text{for $i>\frac p2$.}
\end{array}
\right.
\]
where $\mathfrak c$ is subject to
\[
\mathfrak c\in(1,1,\dots,1)+2\mathbb Z^{n+1},\quad\langle\mathfrak c,\sigma\rangle+p\equiv2i\pmod{2p}.
\]
The coefficients $a_i$ of $\Delta_K$ can be determined by the following rule.
For $i>0$,
\[
a_i=t_{i-1}-2t_i+t_{i+1},
\]
and
\[a_0=1-2\sum_{i>0}a_i.\]
\end{lem}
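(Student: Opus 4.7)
The plan is to derive both formulas from the sharpness of $X$ together with two standard inputs: the Ozsv\'ath--Szab\'o correction term formula for L-space knots, and the classical expression of the Alexander polynomial in terms of torsion coefficients. For an L-space knot and $i\in\{0,\dots,\lfloor p/2\rfloor\}$, we have
\[
d(S^3_p(K),\mathfrak t_i)=d(L(p,1),i)-2t_i(K),\qquad d(L(p,1),i)=\frac{(2i-p)^2-p}{4p},
\]
so it suffices to compute $d(Y,\mathfrak t_i)$ from the lattice embedding of Theorem~\ref{thm:Greene}.

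Inside $(H_2(Z),-Q_Z)\cong\mathbb Z^{n+1}$, I would identify $\spinc$ structures on $Z$ with characteristic vectors $\mathfrak c\in(1,\dots,1)+2\mathbb Z^{n+1}$. The usual affine labeling of $\spinc(S^3_p(K))$ by $c_1$--pairings against the generator $\sigma$ of $H_2(-W_p(K))$ shows that $\mathfrak c$ restricts to $\mathfrak t_i$ on $Y$ precisely when $\langle\mathfrak c,\sigma\rangle+p\equiv 2i\pmod{2p}$. Decomposing $\mathfrak c$ orthogonally as its projection onto $\sigma^\perp\otimes\mathbb Q$ plus the component $\tfrac{\langle\mathfrak c,\sigma\rangle}{p}\sigma$, and tracking signs (the form on $X$ is the negative of the standard form restricted to $\sigma^\perp$), one obtains
\[
c_1(\mathfrak s|_X)^2=-\|\mathfrak c\|^2+\frac{\langle\mathfrak c,\sigma\rangle^2}{p}.
\]
Since $X$ is sharp, the bound (\ref{eq:CorrBound}) becomes an equality after maximizing over $\mathfrak s$, so
\[
4d(Y,\mathfrak t_i)=\max_{\mathfrak c}\Bigl(n-\|\mathfrak c\|^2+\tfrac{\langle\mathfrak c,\sigma\rangle^2}{p}\Bigr),
\]
the maximum taken over characteristic $\mathfrak c$ in the stated congruence class.

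For $0\le i\le\lfloor p/2\rfloor$ we have $|2i-p|\le p$. Fixing the projection of $\mathfrak c$ onto $\sigma^\perp\otimes\mathbb Q$ and letting $m=\langle\mathfrak c,\sigma\rangle$ run through the progression $2i-p+2p\mathbb Z$, the quantity $\|\mathfrak c\|^2-m^2/p$ is independent of the choice of $m$, while $\|\mathfrak c\|^2$ itself is minimized when $m=2i-p$. Hence the maximum defining $d(Y,\mathfrak t_i)$ can be repackaged as a minimum of $\|\mathfrak c\|^2$, giving $\min_{\mathfrak c}\|\mathfrak c\|^2=n-4d(Y,\mathfrak t_i)+(2i-p)^2/p$. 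Substituting the Ozsv\'ath--Szab\'o identity and the formula for $d(L(p,1),i)$ cancels the $(2i-p)^2/p$ terms and yields $\min_{\mathfrak c}\|\mathfrak c\|^2=n+1+8t_i(K)$, which is the claimed formula. The vanishing $t_i(K)=0$ for $i>p/2$ follows from the fact that L-space knots have vanishing torsion coefficients outside the range $|i|<g(K)$, together with the bound $2g(K)-1\le p$.

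The Alexander polynomial relations are then a purely algebraic consequence of the definition $t_i(K)=\sum_{j\ge 1}j\,a_{|i|+j}$: a one-line telescoping gives $a_i=t_{i-1}-2t_i+t_{i+1}$ for $i\ge 1$, while $\Delta_K(1)=1$ combined with the symmetry $a_{-i}=a_i$ gives $a_0=1-2\sum_{i>0}a_i$. The main obstacle is the bookkeeping in the middle paragraph: one must carefully identify $\spinc$ structures across the decomposition $Z=X\cup_Y(-W_p(K))$, check that $c_1(\mathfrak s|_X)^2$ is invariant under $\mathfrak c\mapsto\mathfrak c+2k\sigma$ (so that the right-hand side depends only on $\mathfrak t_i$), and verify the minimization claim for the representative $m=2i-p$ in the given range of $i$; once these are in place the remaining steps are routine.
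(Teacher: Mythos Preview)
The paper does not prove this lemma; it simply records it as \cite[Lemma~2.5]{GreeneCabling} without argument. Your proposal is a correct outline of Greene's own proof: combine the L-space surgery formula $d(S^3_p(K),\mathfrak t_i)=d(L(p,1),i)-2t_i(K)$ with the sharpness of $X$ to express $4d(Y,\mathfrak t_i)$ as an extremum over characteristic vectors in the diagonal lattice $\mathbb Z^{n+1}$, then cancel the $(2i-p)^2/p$ contribution coming from the $-W_p(K)$ side. The invariance of $c_1(\mathfrak s|_X)^2$ under $\mathfrak c\mapsto\mathfrak c+2k\sigma$ and the observation that, for $0\le i\le\lfloor p/2\rfloor$, the representative with $\langle\mathfrak c,\sigma\rangle=2i-p$ minimizes $\|\mathfrak c\|^2$ within its $2\sigma$--coset are exactly the points Greene handles, and your identification $c_1(\mathfrak s|_X)^2=-\|\mathfrak c\|^2+\langle\mathfrak c,\sigma\rangle^2/p$ via orthogonal projection onto $\sigma^{\perp}\otimes\mathbb Q$ is the right one. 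The recovery of $a_i$ from $t_i$ is the standard telescoping, and $t_i=0$ for $i>p/2$ follows from $g(K)\le(p+1)/2$. So your proposal reproduces the cited source rather than providing an alternative route; there is no proof in the present paper to compare it against.
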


Although the genus of $K$ can be deduced from $\Delta_K$, it is useful to know the following direct formula \cite[Proposition~3.1]{GreeneCabling}:
\begin{equation}\label{eq:Genus}
2g(K)=p-|\sigma|_1=p-\sum_{j=0}^n|\sigma_j|.
\end{equation}


\section{Exceptional surgeries}\label{sect:SFS}

Now we are ready to prove our main theorem.

\begin{proof}[Proof of Theorem~\ref{thm:NotSFS}]
(1) If the monodromy of $K$ is right-veering \cite{HKM}, let $\lambda$ be the stable lamination transverse to the fibration, then the degeneracy locus is positive by Proposition~\ref{prop:RV}. So $d(\lambda)$ has the form $\frac m1$ for a positive integer $m$ with $2\le m\le 4g(K)-2$ by Theorem~\ref{thm:Bound}.
If $\frac pq>4g(K)$ or $\frac pq<0$, $\Delta(\frac pq,\frac m1)\ge3$, so Theorem~\ref{thm:Wu} implies that $S^3_{p/q}(K)$ is hyperbolic, a contradiction.

Theorem~\ref{thm:Lamination} implies that $S^3_{4g(K)}(K)$ contains an essential lamination. If $S^3_{4g(K)}(K)$ is a small Seifert fibered space, 
Theorem~\ref{thm:Brittenham} implies that it has a taut foliation, thus it cannot be an L-space \cite{OSzGenus,KR}. 

\vspace{5pt}\noindent(2) If the monodromy of $K$ is neither right-veering nor left-veering, then $d(\lambda)$ has the form $\frac m0$ for a positive integer $m$ with $1\le m\le 4g(K)-2$ by Theorem~\ref{thm:Bound}. If $|q|\ge3$, $\Delta(\frac pq,\frac 10)=|q|\ge3$, so Theorem~\ref{thm:Wu} implies that $S^3_{p/q}(K)$ is hyperbolic, a contradiction.
\end{proof}

\begin{proof}[Proof of Corollary~\ref{cor:CharSlope}]
When $\frac pq\ge4g(T_{r,s})+4$ and $S^3_{p/q}(K)\cong S^3_{p/q}(T_{r,s})$, we know that $S^3_{p/q}(T_{r,s})$ is an L-space and $K$ is fibered \cite{OSzLens,Gh,NiFibred}. By the work of McCoy \cite[Theorem~1.1]{McCoySharp}, $g(K)=g(T_{r,s})$ and $\Delta_K=\Delta_{T_{r,s}}$. 

If $K$ is hyperbolic, our result follows from Theorem~\ref{thm:NotSFS}.

If $K$ is a torus knot, since $\Delta_K=\Delta_{T_{r,s}}$, we must have $K=T_{r,s}$.

If $K$ is a satellite knot, let $R\subset S^3\setminus K$ be an ``innermost'' incompressible torus. Let $V$ be the solid torus bounded by $R$ in $S^3$, and let $L$ be the core of $V$. Since $S^3_{p/q}(K)$ is irreducible and atoroidal, using \cite{GabaiSolidTori}, $V_{p/q}(K)$ must be a solid torus. In this case, $K$ is a $0$--bridge or $1$--bridge braid in $V$ with winding number $w>1$, and $\gcd(p,w)=1$ for homological reasons. The simple loop with slope $p/(qw^2)$ on $R$ is null-homologous in $V_{p/q}(K)$, so $S^3_{p/q}(K)=S^3_{p/(qw^2)}(L)$. Since $R$ is innermost, $L$ is not a satellite knot, so $L$ is either hyperbolic or a torus knot.

If $L$ is hypebolic, it follows from \cite[Theorem~1.2]{LM} that $w^2\le8$, so $w=2$. Hence $K$ is a $(2h+1,2)$--cable of $L$ and $p/q=4h+2+1/n$ for some $n\in\mathbb Z\setminus\{0\}$. 
Since $\frac pq\ge4g(K)+4$, we have
\[4h+2+\frac1{n}\ge 4g(K)+4=4(2g(L)+h)+4,\]
hence $2+\frac1{n}\ge8g(L)+4$,
which is not possible.

If $L$ is a torus knot, we can get a contradiction by the same argument as in the proof of \cite[Proposition~2.5]{NiZhang}.
\end{proof}

\begin{proof}[Proof of Corollary~\ref{cor:1417}]
Let $p=14$ or $17$, and $T$ be $T_{4,3}$ or $T_{5,3}$. It follows from \cite{Gu} that if $S^3_p(K)\cong S^3_p(T)$, then $\Delta_K=\Delta_T$. See also \cite[Known Facts~1.2~(3)]{NiZhangFinite}. Hence Theorem~\ref{thm:NotSFS} rules out the possibility that $K$ is hyperbolic. The finite surgeries on satellite knots have been classified in \cite[Corollary~1.4]{BZ} and \cite[Theorem~7]{BH}, thus we can check $K$ cannot be satellite.
So $K$ is a torus knot. Since $\Delta_K=\Delta_{T}$, we must have $K=T$.
\end{proof}


\section{The $(4n+1)$--surgery on $T_{2n+1,2}$}\label{sect:4n+1}

Throughout this section, let $T=T_{2n+1,2}$ and let $K$ be a knot satisfying $S^3_{4n+1}(K)\cong S^3_{4n+1}(T)$.

\begin{figure}[t]
\begin{center}
\includegraphics[scale=.4]{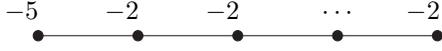}
\put(-166,8.5){$-5$}
\put(-128,8.5){$-2$}
\put(-90,8.5){$-2$}
\put(-46,8.5){$\cdots$}
\put(-14.135,8.5){$-2$}
\caption{\label{fig:Linear}A negative definite plumbing diagram of $X$. There are $n-1$ copies of $-2$.}
\end{center}
\end{figure}

The following lemma is a small part of Greene's lens space realization theorem \cite{GreeneBerge}. Since we do not need the full strength of Greene's theorem, we will give a quick proof here.

\begin{lem}\label{lem:SameAlex}
Either $\Delta_K=\Delta_T$, or $n=5$ and $\Delta_K=\Delta_{T_{5,4}}$.
\end{lem}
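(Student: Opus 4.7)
The plan is to apply Greene's realization theorem to encode $K$ as a changemaker vector and then classify such vectors using the combinatorics of short vectors in the orthogonal complement lattice.

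Since $S^3_{4n+1}(K)\cong L(4n+1,4)$ is an L-space, $K$ is an L-space knot. The linear plumbing $X$ in Figure~\ref{fig:Linear} is a simply connected, negative-definite, sharp $4$-manifold with $b_2(X)=n$ bounded by $S^3_{4n+1}(K)$ (up to orientation); sharpness of negative-definite linear plumbings bounding lens spaces is standard. Theorem~\ref{thm:Greene} then produces a changemaker vector $\sigma=(\sigma_0,\ldots,\sigma_n)\in\mathbb Z^{n+1}$ with $|\sigma|^2=4n+1$ such that $(\sigma)^\perp$ is isomorphic to the positive-definite chain lattice $\Lambda$ whose Gram matrix is the chain $(5,2,\ldots,2)$ with $n-1$ twos.

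The heart of the argument is a short-vector count in $\Lambda$. The norm-$2$ elements of $\Lambda$ form an $A_{n-1}$ root system (spanned by the norm-$2$ generators), giving $n(n-1)$ of them; the norm-$5$ elements are exactly $\pm(v_1-v_2+v_3-\cdots\pm v_k)$ for $1\le k\le n$, giving $2n$ of them; and $\Lambda$ has no norm-$1$ vector. Since the norm-$2$ elements of $(\sigma)^\perp$ are $\pm(e_i-e_j)$ with $\sigma_i=\sigma_j$, and $(\sigma)^\perp$ contains no $\pm e_k$, every $\sigma_i\ge 1$ and the multiplicities $m_v$ of the distinct values of $\sigma$ satisfy $\sum m_v=n+1$ and $\sum\binom{m_v}{2}=\binom n2$, equivalently $\sum m_v^2=n^2+1$. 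A direct check of partitions of $n+1$ shows that the only solution is $\{n,1\}$: exactly one value appears $n$ times and one value once. Writing $\sigma$ accordingly as $n$ copies of $a$ and one copy of $b$, the identity $na^2+b^2=4n+1$ leaves only the two families $\sigma_T=(1,2,2,\ldots,2)$ (where $a=2$, $b=1$) and $\sigma=(1,1,\ldots,1,b)$ with $b^2=3n+1$ (where $a=1$).

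For the second family I would match the norm-$5$ count against $2n$. A direct enumeration shows $(\sigma)^\perp$ contains no vectors $\pm 2e_i\pm e_j$, while the $5$-term $\pm 1$ vectors orthogonal to $\sigma$ must involve the last coordinate with $|\epsilon_1+\cdots+\epsilon_4|=b\in\{2,4\}$; for $b=4$ one counts exactly $2\binom n4$ such vectors. Equating $2\binom n4=2n$ forces $(n-1)(n-2)(n-3)=24$, so $n=5$ and $\sigma=(1,1,1,1,1,4)$. The discriminant check $|\sigma|^2=\det\Lambda=21$ together with the explicit $A_4$-basis $\{e_i-e_{i+1}\}_{i=0}^{3}$ and the extra generator $e_5-e_0-e_1-e_2-e_3$ then produces an explicit isomorphism $(\sigma)^\perp\cong\Lambda$. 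Finally, Lemma~\ref{lem:Alex} recovers $\Delta_K$ from $\sigma$: the first family returns $\Delta_T$ (as $\sigma_T$ is the changemaker of $T$ itself), and the second returns $\Delta_{T_{5,4}}$, since the Ni--Zhang example $S^3_{21}(T_{5,4})\cong S^3_{21}(T_{11,2})$ identifies $(1,1,1,1,1,4)$ as the changemaker of $T_{5,4}$. The main obstacle is the careful combinatorial bookkeeping of short vectors in $(\sigma)^\perp$ for the exotic $\sigma=(1^n,b)$ family; no deeper input is required beyond the definitions and Greene's theorem.
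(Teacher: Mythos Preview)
Your approach is correct and close in spirit to the paper's, but the paper's argument is more direct. Rather than counting all norm-$2$ and norm-$5$ vectors in $\Lambda$, the paper simply uses the given \emph{basis}: the $n-1$ norm-$2$ generators form a connected $A_{n-1}$ chain, so their images $e_i-e_j$ in $(\sigma)^\perp$ force $n$ coordinates of $\sigma$ to coincide; combined with the changemaker fact $\sigma_0=1$, this immediately yields $\sigma\in\{(1,a,\dots,a),(1,\dots,1,b)\}$. Then the mere \emph{existence} of a norm-$5$ vector in $(\sigma)^\perp$ forces $a=2$ and $b\in\{2,4\}$, and the norm constraint $|\sigma|^2=4n+1$ finishes the job ($b=2\Rightarrow n=1$, $b=4\Rightarrow n=5$). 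Your route via global short-vector counts reaches the same endpoint but with more bookkeeping.

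A few small slips to tidy up: your description of the norm-$5$ elements of $\Lambda$ should be $\pm(v_1+v_2+\cdots+v_k)$ rather than the alternating sum (check $|v_1+v_2|^2=5+2-2=5$), though your count $2n$ is correct. More importantly, your claim that $(\sigma)^\perp$ contains no $\pm 2e_i\pm e_j$ in the second family is false when $b=2$ (e.g.\ $2e_0-e_n$); you should first note that $b^2=3n+1$ with $b=2$ forces $n=1$, where $\sigma=(1,2)$ coincides with the first family, and then restrict to $b\ge 3$. Finally, you should say explicitly why $b>4$ is impossible: in that range $(\sigma)^\perp$ has no norm-$5$ vectors at all, contradicting the $2n$ count in $\Lambda$.
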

\begin{proof}
We know $Y=S^3_{4n+1}(T)\cong L(4n+1,4)=-L(4n+1,n)$. (Note that our convention of the orientation of $L(p,q)$ is opposite to the convention in \cite{GreeneBerge}.) Since
\[
\frac {4n+1}{n}=[5,2,\dots,2]^-:=5-\frac1{2-\displaystyle\frac1{2-\displaystyle\frac1{\ddots-\displaystyle\frac1{2}}}},
\]
where there are $(n-1)$ copies of $2$ in the expression,
we conclude that $Y$ is the boundary of a negatively plumbed $4$--manifold $X$ with plumbing diagram given in Figure~\ref{fig:Linear}.
Since $X$ is sharp, it follows from Theorem~\ref{thm:Greene} that the lattice $\Lambda$ is a changemaker lattice.

Now $\Lambda$ has a basis consisting of one vector  with norm $5$ and $(n-1)$ vectors with norm $2$. Any norm $2$ vector in $\mathbb Z^{n+1}$ must be of the form $e_i\pm e_j$, where $e_0,e_1,\dots,e_n$ is the usual orthonormal basis. Since $\Lambda=(\sigma)^{\perp}$ for a changemaker vector $\sigma$, the two coordinates of $\sigma$ corresponding to $e_i$ and $e_j$ must be equal. Thus we conclude that $n$ coordinates of $\sigma$ are equal. Since $\sigma_0=1$ (see the proof of \cite[Theorem~1.6]{GreeneBerge}), $\sigma$ must be of the form
\[
(1,a,a,\dots,a) \text{ or } (1,1,1,\dots,1,b)
\]
for some $a,b\ge1$. Since $\Lambda=(\sigma)^{\perp}$ contains a vector with norm $5$, we have $a=2$ and $b\in\{2,4\}$. 

If $\sigma=(1,2,2,\dots,2)$, using Lemma~\ref{lem:Alex}, we can get $\Delta_K=\Delta_T$. (It is easier to see $g(K)=n$ using (\ref{eq:Genus}).)

If $b=2$, the norm of $\sigma$ is $n+4$. By Theorem~\ref{thm:Greene}, the norm of $\sigma$ is $4n+1$, so $n=1$. This falls into the case we just considered.

If $b=4$, the norm of $\sigma$ is $n+16$. Since the norm of $\sigma$ is $4n+1$, $n=5$. By the computation in \cite[Section~10.3]{OSzAbGr}, $\Delta_K=\Delta_{T_{5,4}}$.
\end{proof}

\begin{proof}[Proof of Theorem~\ref{thm:4n+1}]
By Lemma~\ref{lem:SameAlex}, either $\Delta_K=\Delta_T$, or $n=5$ and $\Delta_K=\Delta_{T_{5,4}}$.
In the latter case, $K$ has the same knot Floer homology as $T_{5,4}$ by \cite{OSzLens}.

Now we consider the case $\Delta_K=\Delta_T$. We have $g(K)=g(T)$.

If $K$ is hyperbolic, we get a contradiction by Theorem~\ref{thm:NotSFS}.

If $K$ is a torus knot, since $\Delta_K=\Delta_T$, we get $K=T$.

If $K$ is a satellite knot,
using the classification of
lens space surgeries on satellite knots \cite{Wang,BL,Wu}, we see that $K$ is the $(2uv+1,2)$--cable of $T_{u,v}$ and $4n+1=4uv+1$. Then 
\[
g(K)=(u-1)(v-1)+uv>uv=n,
\] 
a contradiction.
\end{proof}


\setcounter{section}{1}
\setcounter{thm}{0}
\renewcommand{\thesection}{\Alph{section}}

\section*{Appendix: A criterion for veering}

In this appendix, we assume the readers are familiar with the basic notions of knot Floer homology \cite{OSzKnot,RasThesis}.
We will prove the following theorem.

\begin{thm}\label{thm:SecondTerm}
Let $Y$ be a closed, oriented $3$--manifold, $K\subset Y$ be a fibered knot with fiber $F$ and monodromy $\phi$. If 
\[\mathrm{rank}\widehat{HFK}(Y,K,[F],g(F)-1)=1,\]
then $\phi$ is either right-veering or left-veering.
\end{thm}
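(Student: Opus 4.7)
The plan is to use the Ozsv\'ath--Szab\'o Heegaard diagram adapted to the open book decomposition of $(Y, K)$ and to analyze next-to-top Alexander grading generators by their local geometry near the binding. Fix a cut system of properly embedded arcs $\{a_1, \ldots, a_{2g}\}$ in $F$ (where $g=g(F)$), and form the Heegaard surface $\Sigma = F \cup_{\partial F} (-F)$ with $\alpha$-curves $\alpha_i = a_i \cup \bar{a}_i$ and $\beta$-curves $\beta_i = \phi(a_i) \cup \bar{a}_i'$, where $\bar{a}_i'$ is a small perturbation of $\bar{a}_i$ chosen for transversality. Place basepoints $z, w$ on $\partial F$ in the standard way. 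After minimizing intersections, there is a unique top Alexander grading generator $\mathbf{x}^+$, with each coordinate $x_i^+$ lying in the perturbed region on the $-F$ side near $z$.

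The heart of the argument is a local analysis at the binding. Generators of Alexander grading $g-1$ can differ from $\mathbf{x}^+$ in two ways: either at a single coordinate $i$, where $y_i$ is the intersection of $a_i$ and $\phi(a_i)$ on the $F$-side nearest the endpoint of $a_i$ at $z$; or via a permutation-type change at multiple coordinates. By a judicious choice of cut system, the permutation-type contributions can be handled separately. The key geometric point is that the location of the nearest $F$-side intersection of $a_i$ with $\phi(a_i)$ is dictated by whether $\phi(a_i)$ emanates from the endpoint on the \emph{right} of $a_i$ (so that $a_i$ is a right-veering arc for $\phi$) or on the \emph{left} (so $a_i$ is left-veering). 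A local bigon/index computation should show that these two configurations produce generators of \emph{opposite} Maslov parity.

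Now suppose $\phi$ is neither right-veering nor left-veering. Then by definition there exist arcs $a_R$ and $a_L$ in $F$ such that $\phi(a_R)$ lies to the right of $a_R$ and $\phi(a_L)$ lies to the left of $a_L$ at their respective boundary basepoints. A sequence of handle slides on the cut system allows us to incorporate both $a_R$ and $a_L$ into our diagram. The two local contributions described above then yield generators in $\widehat{CFK}(Y,K,[F],g-1)$ of distinct Maslov parities. Since $\widehat{\partial}$ preserves Alexander grading and shifts Maslov grading by $-1$, these two generators survive as linearly independent classes in homology, forcing $\mathrm{rank}\,\widehat{HFK}(Y,K,[F],g-1) \ge 2$. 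Contrapositively, rank one implies right-veering or left-veering.

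The principal obstacle will be the Maslov grading computation in the local model: one must verify with an explicit domain count in the Heegaard picture near $\partial F$ that the right-side and left-side nearest $F$-intersections genuinely differ in Maslov parity, and one must distinguish these local generators from any permutation-type or $-F$-side generators that arise. A secondary technical point is to ensure that any cut system can be handle-slid so as to contain both a right-veering and a left-veering arc simultaneously; this should follow from handle-slide invariance of $\widehat{HFK}$ together with the fact that any properly embedded arc can be completed to a cut system, but the cut-system change must be executed carefully so that the local Maslov parity computation remains valid.
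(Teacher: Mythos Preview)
Your approach is quite different from the paper's and, as written, has a genuine gap.  The paper's argument is short and purely algebraic: it quotes Baldwin--Vela-Vick to the effect that if $\phi$ is not left-veering then $(\partial_z)_*\colon\widehat{HFK}(Y,K,[F],1-g)\to\widehat{HFK}(Y,K,[F],-g)$ is nonzero, and symmetrically for $(\partial_w)_*$ when $\phi$ is not right-veering.  Passing to the vertically reduced model $C'$ of $CFK^\infty$ (Rasmussen), these two maps are exactly the two components of the induced differential out of and into the bottom group $C'\{(0,-g)\}\cong\mathbb Q$.  Under the rank hypothesis (and the symmetry $\widehat{HFK}(g-1)\cong\widehat{HFK}(1-g)$), both maps are between rank-one $\mathbb Q$--vector spaces, hence isomorphisms; but then the $(-1,-1-g)$ component of $\partial^2$ applied to the bottom generator equals $(\partial_z)_*(\partial_w)_*\ne0$, contradicting $\partial^2=0$.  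No Heegaard-diagram analysis is needed beyond what is already packaged in \cite{BVV}.

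In your outline the step ``these two generators survive as linearly independent classes in homology'' is not justified.  Exhibiting chain-level generators in Alexander grading $g-1$ with opposite Maslov parity only shows they cannot cancel \emph{each other}; it says nothing about cancellation against the permutation-type generators or other $F$-side intersections you yourself mention.  To promote a parity statement at the chain level to a rank bound in homology you would need either (a) control over \emph{all} generators and differentials in that Alexander grading, which is exactly the hard combinatorics you are trying to avoid, or (b) an Euler-characteristic argument---but $\chi$ of $\widehat{HFK}$ in a fixed Alexander grading is determined by $\Delta_K$ and carries no information about veering.  Your ``principal obstacle'' (the local Maslov computation) is real, but even if you resolve it the argument as stated does not close.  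The paper sidesteps all of this by working in the reduced complex and using $\partial^2=0$, which converts the rank-one hypothesis directly into an algebraic obstruction.
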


Theorem~\ref{thm:SecondTerm} easily follows from Baldwin and Vela-Vick's work \cite{BVV}, which will be introduced below.

Let $K$ be a null-homologous knot in a closed, oriented $3$--manifold $Y$, and let $F$ be a Seifert surface.
There is a chain map \[\partial_z\co\widehat{CFK}(Y,K,[F],i)\to\widehat{CFK}(Y,K,[F],i-1)\] defined by counting holomorphic disks with $n_z=1$. Similarly, there is a chain map 
\[\partial_w\co\widehat{CFK}(Y,K,[F],i-1)\to\widehat{CFK}(Y,K,[F],i).\]

The following theorem is contained in the proof of \cite[Theorem~1.1]{BVV}.

\begin{thm}\label{thm:BVV}
Let $K$ be a fibered knot in a closed, oriented $3$--manifold $Y$. Let $F$ be a Seifert surface, and let $\phi\co F\to F$ be the monodromy of the corresponding open book.
If $\phi$ is not left-veering, then the induced map
\[
(\partial_z)_*\co \widehat{HFK}(Y,K,[F],1-g(F))\to \widehat{HFK}(Y,K,[F],-g(F))
\]
is nonzero.  Similarly, if  $\phi$ is not right-veering, then the induced map
\[
(\partial_w)_*\co \widehat{HFK}(Y,K,[F],-g(F))\to \widehat{HFK}(Y,K,[F],1-g(F))
\]
is nonzero.
\end{thm}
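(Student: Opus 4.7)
The strategy is to build an adapted Heegaard diagram for $(Y,K)$ from the open book $(F,\phi)$, locate the unique generator in the bottom Alexander grading, and use the failure of veering to exhibit an embedded bigon whose contribution to the appropriate map does not cancel in homology. I will describe the argument for $(\partial_w)_*$ under the hypothesis that $\phi$ is not right-veering; the statement for $(\partial_z)_*$ then follows by the identical argument with the roles of $w$ and $z$ (and ``right'' and ``left'') exchanged.

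First I would choose a basis of properly embedded arcs $a_1,\dots,a_{2g}$ cutting $F$ into a disk, with $a_1$ selected so that $\phi(a_1)$ lies to the left of $a_1$ at some common boundary endpoint $x\in\partial F$. Such a basis exists: failure of right-veering supplies an arc witnessing it, and any properly embedded arc may be completed to a cut system by standard moves. With this choice, form the Honda--Kazez--Mati\'c Heegaard diagram: the Heegaard surface is $\Sigma=F\cup_\partial(-F)$, with $\alpha_i=a_i\cup(-a_i)$ and $\beta_i=b_i\cup\phi(b_i)$, where $b_i$ is a pushoff of $a_i$ along $\partial F$ in the right-veering direction; place $w$ and $z$ just above and below $K$ in a neighborhood of $x$. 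In this diagram the Alexander grading $-g(F)$ is realized by a unique generator $\mathbf{x}_-$, whose coordinates lie on the $-F$ side of $\Sigma$ near the right-shifted pushoffs.

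Because $\phi(a_1)$ veers to the left of $a_1$ at $x$, the curves $\alpha_1$ and $\beta_1$ acquire an additional intersection $p$ near $x$; together with the $\mathbf{x}_-$-coordinate on $\alpha_1$, the point $p$ bounds an embedded bigon $D\subset\Sigma$ localized near $x$ with $n_w(D)=1$ and $n_z(D)=0$. Replacing that coordinate by $p$ yields a generator $\mathbf{y}$ in Alexander grading $1-g(F)$. Being a topological bigon, $D$ admits a unique holomorphic representative by the Riemann mapping theorem, and hence contributes $\pm\mathbf{y}$ to $\partial_w(\mathbf{x}_-)$.

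The main obstacle is to show that this contribution survives on homology, i.e.\ that it is not cancelled by other disks or by a boundary in grading $1-g(F)$. I would argue that any other holomorphic disk from $\mathbf{x}_-$ to $\mathbf{y}$ with $n_w=1$, $n_z=0$ would differ from $D$ by a periodic domain, but the only periodic domains in the HKM diagram are (multiples of) doubled pages, which carry $w$ and $z$ with equal multiplicity, so no such competitor exists; and an analogous periodic-domain analysis rules out $\mathbf{y}$ being a boundary from a different $-g(F)$ generator via an $n_w=1$, $n_z=0$ disk. Consequently $(\partial_w)_*[\mathbf{x}_-]=\pm[\mathbf{y}]\ne 0$, and the mirror construction, applied to an arc witnessing the failure of left-veering, gives the symmetric statement for $(\partial_z)_*$.
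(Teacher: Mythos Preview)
The paper does not supply its own proof of this theorem: it is quoted from Baldwin--Vela-Vick, with the remark that it is ``contained in the proof of \cite[Theorem~1.1]{BVV}.'' So there is nothing in the paper to compare against beyond that citation. Your sketch is recognisably the HKM/Baldwin--Vela-Vick strategy --- build the open-book Heegaard diagram, use a non-veering arc to produce a bigon out of (or into) the distinguished extremal generator --- so at the level of ideas you are on the right track.

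That said, the place where the real content lies is exactly where your argument is thin: passing from ``there is a bigon contributing $\pm\mathbf{y}$ to $\partial_w(\mathbf{x}_-)$'' to ``$(\partial_w)_*[\mathbf{x}_-]\ne 0$ in $\widehat{HFK}(Y,K,1-g)$.'' Your periodic-domain remarks only address whether the $\mathbf y$-coefficient of $\partial_w(\mathbf{x}_-)$ is $\pm 1$; they say nothing about the other terms of $\partial_w(\mathbf{x}_-)$, nor --- more to the point --- about whether the resulting cycle is a boundary for the $\widehat{HFK}$-differential (the one counting disks with $n_z=n_w=0$) in grading $1-g$. Your sentence about ruling out ``$\mathbf{y}$ being a boundary from a different $-g(F)$ generator via an $n_w=1$, $n_z=0$ disk'' is aimed at the wrong target: the obstruction to $(\partial_w)_*\ne 0$ is $\partial_w(\mathbf{x}_-)$ lying in the image of the grading-preserving differential from generators in grading $1-g$, not from another bottom-grading generator (of which there are none in this diagram anyway). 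You also need to be careful with the direction of the bigon and the $Y$ versus $-Y$ conventions; in the HKM argument the bigon naturally exhibits the contact generator as a \emph{target}, not a source, and one must track how orientation reversal and the placement of $w$ convert that into the statement you want about $(\partial_w)_*$ on $Y$. The clean way Baldwin--Vela-Vick close this gap is to invoke that non-right-veering forces the supported contact structure to be overtwisted (Honda--Kazez--Mati\'c), hence its contact invariant vanishes in $\widehat{HF}(-Y)$, and then read off the nontriviality of the induced map from the filtered structure; this bypasses any ad hoc non-cancellation analysis.
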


\begin{proof}[Proof of Theorem~\ref{thm:SecondTerm}]
We will use $\mathbb Q$--coefficients for Heegaard Floer homology.
Assume that $\phi$ is neither right-veering nor left-veering.
Let $C=CFK^{\infty}(Y,K,[F])$ be the $\mathbb Z^2$--filtered knot Floer chain complex. By
\cite[Lemma~4.5]{RasThesis}, $C$ is filtered chain homotopy equivalent to a chain complex $C'$ with 
\[
C'\{(i,j)\}\cong \widehat{HFK}(Y,K,[F],j-i).
\]
Let $a$ be a generator of $C'\{(0,-g(F))\}$, consider the component $b$ of  $\partial ^2a$ in $C'\{(-1,-1-g(F))\}$. On one hand, since $\partial ^2=0$, $b=0$. On the other hand, $b$ is just $(\partial_z)_*\circ(\partial_w)_*(a)$. 
By Theorem~\ref{thm:BVV}, both $(\partial_z)_*$ and $(\partial_w)_*$ are isomorphisms, so $b\ne0$, a contradiction.\end{proof}

It is well-known that $\widehat{HFK}(Y,K,[F],g(F))$, the topmost term in knot Floer homology, contains a lot of information about the topology of the knot complement \cite{OSzGenus,Gh,NiFibred}. It is natural to ask what topological information is contained in other terms of $\widehat{HFK}(Y,K)$. Baldwin and Vela-Vick's work \cite{BVV} and our Theorem~\ref{thm:SecondTerm} gave some partial answers to this question. 

\end{document}